\documentclass[a4paper,fleqn]{cas-sc}

\usepackage[numbers,sort&compress]{natbib}

\usepackage{microtype}     
\usepackage{amsmath}
\usepackage{amssymb}
\usepackage{mathtools}
\usepackage{nicefrac}     
\usepackage{tabularx}
\usepackage{booktabs}       
\usepackage{multirow}
\usepackage{array}     
\usepackage{enumitem}
\usepackage{url}

\usepackage[caption=false,font={sf}]{subfig}
\DeclareRobustCommand{\mathbbm}[1]{\text{\usefont{U}{bbm}{m}{n}#1}}

\usepackage{tikz}
\usepackage{pgfplots}
\usepgfplotslibrary{groupplots}
\pgfplotsset{compat=1.17}  
\usetikzlibrary{
    arrows.meta,
    automata,
    decorations.pathreplacing,
    patterns,
    patterns.meta,
    positioning,
    shapes.geometric,
    calc,               
    matrix
}

\definecolor{RubineRed}{RGB}{209,0,86}
\definecolor{BlueGreen}{RGB}{0,158,169}
\definecolor{Brown}{RGB}{150,75,0}
\definecolor{ultramarineblue}{rgb}{0.05, 0.6, 0.96}
\colorlet{mycolor1}{teal!20!white}
\colorlet{mycolor2}{ultramarineblue!20!white}

\newcommand{\CP}{\mathrm{C_P}}  
\newcommand{\CS}{\mathrm{C_S}}  
\newcommand{\CA}{\mathrm{C_A}}

\newcommand{\E}{\mathbb{E}}                 
\newcommand{\var}{\mathbb{V}\mathrm{ar}}    
    
\newcommand{\Prob}{\mathbb{P}}              
\newcommand{\diff}{\,\mathrm{d}}

\newtheorem{theorem}{Theorem}
\newtheorem{lemma}[theorem]{Lemma}

\newtheorem{corollary}[theorem]{Corollary}
\newdefinition{definition}{Definition}
\newdefinition{assumption}{Assumption}
\newdefinition{remark}{Remark}
\newproof{pf}{Proof}

\ifcsmacro{proof}{}{\newenvironment{proof}{\begin{pf}}{\end{pf}}}

\begin{document}
\let\WriteBookmarks\relax
\def\floatpagepagefraction{1}
\def\textpagefraction{.001}

\shorttitle{Stochastic Bounds on System Time in $\mathrm{M^T/G/1}$ Queues}
\shortauthors{M. Rahnamania, M. Mandjes, F. Ashtiani}

\title[mode = title]{Analysis of Triggered Packet Streams: Stochastic Bounds on System Time under General Triggering Delays}

\author[1]{Mehran Rahnamania}[orcid=0009-0003-9098-5586]
\cormark[1]
\ead{mehran.r109@yahoo.com}

\author[2,3]{Michel Mandjes}[orcid=0000-0001-6783-4833]
\ead{m.r.h.mandjes@math.leidenuniv.nl}

\author[1]{Farid Ashtiani}[orcid=0000-0002-6955-1711]
\ead{ashtianimt@sharif.edu}

\affiliation[1]{organization={Department of Electrical Engineering, Sharif University of Technology},
            city={Tehran},
            country={Iran}}

\affiliation[2]{organization={Mathematical Institute, Leiden University},
            addressline={P.O. Box 9512}, 
            city={Leiden},
            postcode={2300 RA}, 
            country={The Netherlands}}

\affiliation[3]{organization={Korteweg-de Vries Institute for Mathematics, University of Amsterdam},
            city={Amsterdam},
            country={The Netherlands}}

\cortext[1]{Corresponding author}

\begin{abstract}
Many emerging communication systems increasingly involve devices that transmit packets in multiple causally dependent stages as part of an internal workflow, rather than waiting for receiver feedback. To analyze such systems, we recently introduced the $\mathrm{M^T/G/1}$ queue, in which primary customers arrive according to an exogenous Poisson process, and each triggers a secondary customer to join the queue after an independent random delay, called the \textit{triggering delay}. Our prior heuristic approximations assume identical exponential service times, and our matrix-analytic framework assumes exponential triggering delays. However, no exact analysis is available when both distributions are general.

In this paper, we derive stochastic bounds, in the usual stochastic order, on the system-time distribution of each customer class under general triggering-delay and service-time distributions. The service times within each primary--secondary pair may be generally dependent. Using sample-path coupling and level-crossing arguments, we show that the stationary workload is stochastically sandwiched between those of two classical queues: a two-class $\mathrm{M/G/1}$ queue with independent Poisson arrivals (lower bound), and a batch-arrival $\mathrm{M^{[2]}/G/1}$ queue where each primary--secondary pair arrives simultaneously (upper bound). Notably, these bounds do not depend on the triggering-delay distribution, and they are asymptotically tight in the sense that the stationary workload converges in distribution to that of the lower- and upper-bound queues as the triggering delays diverge to infinity and converge to zero in probability, respectively. We translate these workload bounds into system-time bounds for each customer class. Simulation experiments support these bounds.
\end{abstract}


\begin{keywords}
Queueing theory \sep Triggered-arrival queues \sep Sample-path coupling \sep Stochastic ordering \sep Level-crossing
\end{keywords}

\maketitle

\section{Introduction}
\label{sec:intro}

Many emerging communication systems increasingly involve devices that transmit packets in multiple causally dependent stages as part of an internal workflow, rather than waiting for receiver feedback. Specifically, in the simplest setting, after transmitting a primary packet, a user might transmit a secondary packet after a potentially random delay determined by its own workflow, without waiting for feedback or acknowledgment~\cite{Rahnamania2024CorrelatedArrivals}. Such traffic patterns arise naturally across a wide range of applications. In Industrial Internet of Things (IIoT) systems, for example, a lightweight alarm may be transmitted immediately after an anomaly is detected, followed by a more detailed report after a local sensing or processing delay~\cite{s23073544}. In ultra-reliable low-latency communication (URLLC), blind packet repetitions may likewise be transmitted at predetermined offsets to enhance reliability without incurring feedback delays~\cite{9174916}. The same multi-stage feedback-free transmission pattern can also arise in applications involving AI agents. For instance, they may send responses in several parts, such as the text first and the corresponding audio thereafter~\cite{liu-moq-live-agent-interaction-02}.

These examples illustrate a common communication scenario in which numerous independent users operate concurrently over a shared communication link, and the transmission epochs of packets generated by each user are causally dependent. When transmissions from all sources are aggregated at a gateway, the resulting aggregate arrival process exhibits complex interarrival dependence. To analyze such systems, we recently introduced the $\mathrm{M^T/G/1}$ queue, in which primary customers arrive according to an exogenous Poisson process, and each primary customer triggers exactly one secondary customer to join the queue after a random delay, referred to as the \textit{triggering delay}~\cite{Rahnamania2024CorrelatedArrivals,Rahnamania2025MatrixAnalytic} (see Fig.~\ref{fig:model:system}). It is important to note that the Poisson assumption for primary arrivals is motivated by the fact that the superposition of many independent point processes (e.g., those generated by a large number of sensors or users) is well approximated by a Poisson process.

Standard queueing frameworks and classical methodologies are not directly applicable to this queue. The causal dependence between primary and secondary arrivals in the aggregate arrival stream precludes the use of standard analytical methods used for queueing systems with renewal arrival processes, including embedded Markov chains and Lindley's integral equation for $\mathrm{M/G/1}$ and $\mathrm{G/M/1}$ queues~\cite{Kleinrock1975Volume1,asmussen2003applied}.

Frameworks such as the Batch Markovian Arrival Process (BMAP) and the Markov-Modulated Poisson Process (MMPP) can model dependent arrivals through finite-state Markovian modulation~\cite{FISCHER1993149,chakravarthy2001batch,Latouche1999MatrixAnalytic,neuts1981matrix}. In these models, dependence is captured by a finite-dimensional background Markov chain, which is precisely what gives these types of queues much of their tractability. By contrast, in the $\mathrm{M^T/G/1}$ queue, the arrival epoch of each secondary customer depends on that of a primary customer through a random generally distributed triggering delay. A secondary customer may arrive long after its corresponding primary customer and may be interleaved with primary--secondary arrivals from other sources. Consequently, when triggering delays are generally distributed, an exact Markovian representation would typically require tracking information about all currently active triggering delays; thus, the effective state space is not \textit{finite-dimensional} in any straightforward sense.

Self-exciting point processes, particularly Hawkes processes, provide a widely studied framework for modeling dependent and clustered arrivals~\cite{Hawkes1971,Daw2018}. Nevertheless, they do not capture the arrival structure considered here. In a Hawkes process, an arrival alters the future conditional intensity through a self- or mutually exciting mechanism. Under its branching interpretation, an arrival may generate multiple offspring, which may themselves generate further arrivals, resulting in multigenerational clusters. In our model, by contrast, each primary arrival triggers exactly one secondary arrival after a random triggering delay, and secondary arrivals generate no further arrivals. Accordingly, the dependence arises from an explicit one-to-one connection between primary and secondary arrival epochs, rather than from a general self-excitation mechanism, making the arrival structure different from Hawkes-type clustering.

Finally, G-networks provide a framework for modeling concurrent movement in a queueing network through positive and negative signals and admit product-form solutions under quasi-reversibility conditions~\cite{6796323,ec60910f-c7c5-39b2-8655-659b6b05ba84,1130000795627348864}. However, the product-form results based on quasi-reversibility~\cite[Chs.~3--4]{1130000795627348864} do not directly apply to the one-to-one primary--secondary dependence considered here without introducing a substantial approximation via the ``additional departure'' mechanism required to represent this triggered arrival using positive signals.

To address these analytical challenges, our initial investigations of the $\mathrm{M^T/G/1}$ queue proceeded in three stages. First, we developed a heuristic approximation for the mean system times under general triggering-delay distributions, assuming that the service-time distributions of primary and secondary customers are independent and follow the same exponential distribution~\cite{Rahnamania2024CorrelatedArrivals}. Subsequently, we refined our analysis by leveraging a structural property of secondary arrivals to develop a more accurate heuristic approximation of the system-time distributions for both customer classes under phase-type triggering delays and independent, identically distributed exponential service times~\cite{11571174}. More recently, we derived a matrix-analytic characterization with controllable truncation error for the stationary workload and system-time distributions when triggering delays follow an exponential distribution and service times are independent, with class-specific general distributions~\cite{Rahnamania2025MatrixAnalytic}.

Extending these analyses to triggering delay and service time with general distributions remains an open problem and is not readily tractable within the same frameworks. Both heuristic approximations rely specifically on \textit{identical exponential} service times for the two customer classes~\cite{Rahnamania2024CorrelatedArrivals,11571174}. The matrix-analytic solution, in turn, relies on exponential triggering delays. The obstacle is structural. Under general triggering delays, an exact Markovian characterization would generally need to track the residual triggering delay of every pending secondary customer, i.e., every pair whose primary customer has already arrived but whose secondary customer has not yet arrived. Because the number of such pending secondary customers is unbounded, this description involves a vector of continuous residual triggering delays of random, unbounded length, and the resulting state space can become infinite-dimensional rather than the finite- or level-dependent structures usually exploited in standard matrix-analytic methods.

One approach is to approximate general triggering-delay distributions using phase-type (PH) distributions within a matrix-analytic framework. However, this approach can substantially increase the complexity of the state space. In particular, with an $m$-phase PH representation and independent service times, one must track the number of active triggering delays in each phase, resulting in an $m$-dimensional count vector with unbounded coordinates, in addition to the workload. When service times within a primary--secondary pair are dependent, the pair-specific dependence must also be tracked. Thus, the analytical obstacle is an unbounded, multidimensional level structure rather than a simple one-dimensional queue. This makes finite-state truncation and rigorous error control challenging~\cite{ABATE2021102207,SOMASHEKAR2023102356}. Moreover, constructing the corresponding generator matrix is highly complex, as it must simultaneously account for: (i) Poisson arrivals of primary customers; (ii) the initiation of new triggering delays; (iii) phase transitions of all ongoing triggering delays; (iv) the completion of triggering delays that lead to the arrival of secondary customers; and (v) the residual service time of the customer currently in the server.

This paper adopts a distinct approach. Rather than seeking a closed-form exact characterization of the system-time distributions or employing high-dimensional state-space truncations for generally distributed triggering delays, we establish rigorous stochastic lower and upper bounds, in the usual stochastic order ($\le_{\mathrm{st}}$), for the system-time distributions of the $\mathrm{M^T/G/1}$ queue. Utilizing the auxiliary customer class introduced in~\cite{11571174}, we define two embedded workload processes: a reference process initiated by an auxiliary arrival and a target process initiated by a secondary arrival. Coupling these processes in a common probability space enables us to track workload evolution over a shared triggering-delay interval and facilitates a sample-path comparison. Consequently, we show that an arriving secondary customer observes a workload that is stochastically larger than the steady-state workload but bounded above by that workload plus the service time of its corresponding primary customer. Combining this result with a level-crossing argument, we derive rigorous bounds in the usual stochastic order for the stationary workload and system-time distributions. These bounds are independent of the triggering-delay distribution. In particular, we show that the stationary workload $V_{\mathrm{M}}$ of the $\mathrm{M^T/G/1}$ queue is stochastically \textit{sandwiched} between the stationary workloads of two tractable queues:
\begin{equation}
    \label{eq1}
    V_{\mathrm{L}} \le_{\mathrm{st}} V_{\mathrm{M}} \le_{\mathrm{st}} V_{\mathrm{U}},
\end{equation}
where $V_{\mathrm{L}}$ denotes the stationary workload of a standard two-class $\mathrm{M/G/1}$ queue in which primary and secondary customers arrive as independent Poisson processes (the lower bound), while $V_{\mathrm{U}}$ corresponds to an $\mathrm{M^{[2]}/G/1}$ queue with batch arrivals of size two, where each primary--secondary pair arrives simultaneously in a batch (the upper bound). The service times within each batch maintain the dependence structure of the original system. This stochastic sandwich, which holds for general triggering-delay distributions and any dependence between the service requirements within each pair, constitutes a key theoretical contribution of the paper. We then use the PASTA property to translate \eqref{eq1} into stochastic bounds on the system time of primary customers, while the corresponding bound for secondary customers follows from our sample-path analysis. Moreover, properly using our sample-path coupling, we show that the bounds become asymptotically tight in the sense that the stationary workload converges in distribution to the lower-bound workload as the triggering delays diverge in probability to infinity and to the upper-bound workload as the triggering delays converge in probability to zero. Finally, using the intuition derived from our analysis, we provide a heuristic engineering guideline that offers an intuitive threshold for determining when either the lower or the upper bound is expected to provide a practical approximation.
Table~\ref{tab:comparison_prior_works} summarizes the key distinctions between this work and previous studies.

\begin{table*}[t]
\centering
\caption{Comparison of prior works with the proposed $\mathrm{M^T/G/1}$ stochastic bounding framework.}
\label{tab:comparison_prior_works}
\scriptsize
\renewcommand{\arraystretch}{1.0}
\setlength{\tabcolsep}{6pt}

\begin{tabular}{@{}lcccc@{}}
\toprule
\textbf{Reference} 
& \textbf{Triggering delay} 
& \multicolumn{2}{c}{\textbf{Service times}} 
& \textbf{Main result} \\
\cmidrule(lr){3-4}
& 
& \textbf{Distribution} 
& \textbf{Dependence} 
& \\
\midrule
\cite{Rahnamania2024CorrelatedArrivals} 
& General 
& Identical exponential 
& Independent 
& Heuristic mean system-time approximation \\

\cite{11571174} 
& Phase-type 
& Identical exponential 
& Independent 
& Heuristic system-time distribution approximation \\

\cite{Rahnamania2025MatrixAnalytic} 
& Exponential 
& General, class-dependent 
& Independent 
& Matrix-analytic method with controllable truncation error \\

\textbf{This work} 
& General 
& General, class-dependent 
& General dependence
& Stochastic lower and upper bounds on system times \\
\bottomrule
\end{tabular}
\end{table*}

The main contributions of this paper are summarized as follows:
\begin{itemize}
\item \textit{Sample-Path Coupling Framework}: We develop a sample-path coupling construction involving two auxiliary components to relate the workload observed by secondary customers in the $\mathrm{M^T/G/1}$ queue to its stationary workload distribution. This construction establishes the key stochastic ordering in Corollary~\ref{cor:ordering}, which underpins our subsequent analysis.

\item \textit{Workload Bounds}: We derive rigorous lower and upper bounds in the usual stochastic order for the stationary workload distribution using a level-crossing argument. These bounds hold for general triggering-delay distributions and general dependence between service times within each pair, provided that the service-time second moments are finite. The lower bound is given by the stationary workload of a two-class $\mathrm{M/G/1}$ queue, in which primary and secondary customers arrive according to independent Poisson processes. The upper bound is given by the stationary workload of an $\mathrm{M^{[2]}/G/1}$ queue, in which each primary customer and its corresponding secondary customer arrive simultaneously as a batch. These bounds are independent of the triggering-delay distribution. We further demonstrate that the bounds are asymptotically tight in the sense that the stationary workload converges in distribution to that of the lower-bound queue as triggering delays diverge in probability to infinity, and to that of the upper-bound queue as triggering delays converge in probability to zero (see Theorems~\ref{thm:asymp_large_delay} and~\ref{thm:asymp_zero_delay}).

\item \textit{System-Time Distribution Bounds}: We translate the workload bounds into corresponding stochastic bounds on the system-time distributions of both customer classes, while accommodating general service-time distributions and dependence between the service requirements of each primary--secondary pair (see Theorem~\ref{thm:system_time_bounds}).

\item \textit{Heuristic Engineering Approximation}: We provide a heuristic rule of thumb that compares the mean triggering delay with the expected times for the lower-bound queue to become empty and the upper-bound queue to reach a specified workload level to determine when either bound may serve as a practical approximation, and we validate this rule numerically.

\end{itemize}

The remainder of this paper is organized as follows. Section~\ref{sec:model} introduces the system model and formalizes the queueing dynamics. Section~\ref{sec:ph} presents the core sample-path coupling framework. Section~\ref{sec:bounds} derives the stochastic bounds, establishes their asymptotic tightness, translates them into bounds on system time, and provides heuristic engineering guidelines for selecting an appropriate bound for practical approximation. Section~\ref{sec:numerical} presents the numerical results, and Section~\ref{sec:conclusion} concludes the paper.

\section{System Model}
\label{sec:model}

In this section, we formally define the $\mathrm{M^T/G/1}$ queue and distinguish between the arrival processes of the two customer classes in terms of the applicability of the PASTA property.

\subsection{Arrival Process and Queue Dynamics}
\label{subsec:model:description}

\begin{figure}
    \centering
    \begin{tikzpicture}[scale=1]
    
    \foreach \x in { 0.65, 0.925, 1.2,1.475}
    \draw[thick,black,fill=mycolor1] (\x+0.5, 0.075) rectangle (\x+0.7, 0.925);

    \draw[->, line width=0.55mm, RubineRed] (2.7, 0.7) -- (3.7, 0.7);
    \draw[->, line width=0.55mm,BlueGreen, densely dashed] (2.7, 0.4) -- (3.7, 0.4);
    \node[BlueGreen] at (-4.15,0.5-0.15) {$\CS$};
    \node[RubineRed] at (-4.15,1.2) {$\CP$};
    \node[black] at (-1.25,2.15-0.1) {\scriptsize triggering delay};
    \node[black] at (-1.25,-0.9) {\scriptsize triggering delay};

    \draw[thick,black,fill=RubineRed]  (-0.2,0.8-0.2) rectangle (0,1.6-0.2);
    \draw[thick,black,fill=BlueGreen]  (-0.2-2.2,0.1-0.4) rectangle (0-2.2,0.8-0.3);

    \draw[thick,black,pattern=crosshatch, pattern color=RubineRed]  (-0.2-0.9,0.8-0.2) rectangle (0-0.9,1.6-0.2);
    \draw[thick,black,pattern=crosshatch, pattern color=BlueGreen]  (-0.2-1.5,0.1-0.4) rectangle (0-1.5,0.8-0.3);

    \draw[thick,black, pattern=checkerboard, pattern color=RubineRed]  (-2.6,0.6) rectangle (-2.4,1.4);
    \draw[thick,black, pattern=checkerboard, pattern color=BlueGreen]  (-3.7,0.1-0.4) rectangle (-3.5,0.8-0.3);
    
    \draw[->,  line width=0.55mm, RubineRed] (0.05, 1.1-0.2)  to[in=180, out=0] (0.8, 0.7);
    \draw[-,  line width=0.55mm, RubineRed] (-0.85, 1.1-0.2) -- (-0.27, 1.1-0.2);
    \draw[-,  line width=0.55mm, RubineRed] (-2.35, 1.1-0.2) -- (-1.175, 1.1-0.2);
    \draw[-,  line width=0.55mm, RubineRed] (-4, 1.1-0.2) -- (-2.65, 1.1-0.2);

    \draw[densely dashed,->,  line width=0.55mm, BlueGreen] (-1.4, 0.4-0.3) -- (0, 0.1)--(0, 0.4-0.3) to[in=180, out=0] (0.8, 0.3);
    \draw[densely dashed,-,  line width=0.55mm, BlueGreen] (-2.15, 0.4-0.3) -- (-1.75, 0.4-0.3);
    \draw[densely dashed,-,  line width=0.55mm, BlueGreen] (-3.45, 0.4-0.3) -- (-2.45, 0.4-0.3);
    \draw[densely dashed,-,  line width=0.55mm, BlueGreen] (-4, 0.4-0.3) -- (-3.75, 0.4-0.3);

      \draw[-, line width=0.2mm, densely dotted, black] (-0.1, 0.6)  -- (-0.1, -0.6);
    \draw[<->, line width=0.5mm, densely dotted, black] (-2.3, -0.6)  -- (-0.1, -0.6);
    \draw[-, line width=0.2mm, densely dotted, black] (-2.3, -0.3)  -- (-2.3, -0.6);

    \draw[<->, line width=0.5mm, densely dotted, black] (-1.6, 1.9-0.2)  -- (-1, 1.9-0.2);
    \draw[<->, line width=0.5mm, densely dotted, black] (-3.6, 1.9-0.2)  -- (-2.5, 1.9-0.2);
    \draw[-, line width=0.2mm, densely dotted, black] (-3.6, 1.7)  -- (-3.6, 0.5);
    \draw[-, line width=0.2mm, densely dotted, black] (-2.5, 1.7)  -- (-2.5, 1.4);
    
    \draw[-, line width=0.2mm, densely dotted, black] (-1.6, 1.9-0.2)  -- (-1.6, 0.5);
    \draw[-, line width=0.2mm, densely dotted, black] (-1, 1.9-0.2)  -- (-1, 1.5-0.1);

    \draw[line width=0.55mm,fill=mycolor2]  (2.75,0.5) circle [radius=0.5];
    \draw[line width=0.55mm] (0.5,0) -- (2.25,0) -- (2.25,1) -- (0.5, 1);
    
\end{tikzpicture}
    \caption{Illustration of the $\mathrm{M^T/G/1}$ queue. Primary customers ($\CP$) arrive according to a Poisson process, and each triggers a corresponding secondary customer ($\CS$) to join the queue after a random triggering delay.}
    \label{fig:model:system}
\end{figure}

We consider a single-server queueing system with infinite buffer capacity and first-come-first-served (FCFS) service discipline. We denote this queueing system by $\mathrm{M^T/G/1}$, where ``T'' refers to the triggering mechanism described below. As illustrated in Fig.~\ref{fig:model:system}, the system consists of two customer classes:
\begin{itemize}
    \item \emph{Primary customers} ($\CP$): These customers arrive according to an exogenous Poisson process with rate $\lambda$.

    \item \emph{Secondary customers} ($\CS$): Each primary customer, upon arriving at the queue, triggers a corresponding secondary customer to join the queue after a random delay, referred to as the \emph{triggering delay}. The triggering delays $\{Y_i\}_{i\ge 1}$ are i.i.d. random variables distributed identically to a generic random variable $Y$, whose distribution function is $F_Y(y)\coloneqq \Prob(Y\le y)$ and whose mean is finite, i.e., $\E[Y] < \infty$.
\end{itemize}

Equivalently, let $\{T_i\}_{i\ge 1}$ denote the sequence of primary arrival epochs. The corresponding secondary arrival epochs are then given by $\{T_i+Y_i\}_{i\ge 1}$. The service requirements of the $i$th primary customer and its corresponding secondary customer are denoted by the pair $(X_{\CP,i}, X_{\CS,i})$. We assume that the sequence of service-requirement pairs $\{(X_{\CP,i},X_{\CS,i})\}_{i\ge 1}$ is i.i.d. and follows the distribution of a generic random pair $(X_{\CP},X_{\CS})$ with joint distribution function
\[
    B_{\CP,\CS}(x_1,x_2)\coloneqq \Prob(X_{\CP}\le x_1, X_{\CS}\le x_2).
\]
The corresponding marginal distribution functions are
\[
    B_{\CP}(x)\coloneqq \Prob(X_{\CP}\le x),
    \qquad
    B_{\CS}(x)\coloneqq \Prob(X_{\CS}\le x),
\]
with tail distribution functions
\[
    \overline B_{\CP}(x)\coloneqq 1-B_{\CP}(x),
    \qquad
    \overline B_{\CS}(x)\coloneqq 1-B_{\CS}(x).
\]
We assume that $X_{\CP}$ and $X_{\CS}$ have finite first and second moments. Furthermore, the primary arrival epochs $\{T_i\}_{i\ge 1}$, the triggering delays $\{Y_i\}_{i\ge 1}$, and the service-requirement pairs $\{(X_{\CP,i},X_{\CS,i})\}_{i\ge 1}$ are mutually independent.

Because each $\CP$ generates exactly one $\CS$, the total long-run average arrival rate into the system is $2\lambda$. However, the aggregate arrival process is generally neither a Poisson process nor a renewal process~\cite{Rahnamania2025MatrixAnalytic}. For this reason, throughout this paper, the term ``rate'' refers to the long-run average rate.

For stability, we require
\begin{equation}\label{eq:model:stability}
    \rho=\lambda\big(\E[X_{\CP}]+\E[X_{\CS}]\big)<1,
\end{equation}
where $\rho$ denotes the traffic intensity~\cite{asmussen2003applied}. Under condition \eqref{eq:model:stability}, Loynes' stability theorem for stationary and ergodic queues ensures that the system admits a unique stationary workload distribution. The analysis in this paper is conducted in this stationary regime, and the terms \emph{stationary} and \emph{steady-state} are used interchangeably. 

The two arrival streams differ in terms of the applicability of the PASTA property. For ease of reference, we recall the relevant definitions from \cite{Rahnamania2024CorrelatedArrivals} below.

\subsection{Applicability of PASTA}
\label{subsec:model:pasta}

\begin{definition}[\emph{Causally independent} Poisson arrival process]
\label{def:exp:causal_indep}
An arrival process is a \emph{causally independent Poisson process} if it is Poisson and its arrival epochs are independent of the history and current workload of the queue, including the arrival epochs of other customer classes that affect the queue workload.
\end{definition}

Arrivals of primary customers satisfy the lack-of-anticipation assumption. Accordingly, the PASTA property applies, and arriving $\CP$ customers observe the stationary workload distribution.

Although the marginal arrival process for $\CS$ customers is a Poisson process, each secondary arrival epoch equals its corresponding primary arrival epoch plus a triggering delay. Consequently, an arrival of a $\CS$ customer conveys information about a prior arrival of a $\CP$ customer that has already influenced the workload. Therefore, the secondary arrival process is not causally independent, and the PASTA property does not directly apply to $\CS$ customers.

\section{Stochastic Ordering Principles}
\label{sec:ph}

\begin{figure}
    \centering
        \begin{tikzpicture}[scale=0.9]
        \draw[->, line width=0.55mm, RubineRed] (-0.725, 3.5) -- (-0.725, 1.25) to[in=180, out=-90] (0.25, 0.75);
        \draw[->, line width=0.55mm, RubineRed] (2.6, 0.75) -- (3.5, 0.75);

        \draw[-, very thick, black] (-2.5, 0.5) -- (-2.25, 0.5);
        \draw[-, very thick, black] (-2.5, 0.525) -- (-2.25, 0.525);
        \draw[-, very thick, black] (-2.5, 0.55) -- (-2.25, 0.55);
        \draw[-, very thick, black] (-2.5, 0.475) -- (-2.25, 0.475);
        \draw[-, very thick, black] (-2.5, 0.45) -- (-2.25, 0.45);

        \draw[densely dashed,->, line width=0.55mm, BlueGreen] (-0.8, 3.5) -- (-0.8, 3.4)  to[in=90, out=-90] (-3.5,2.3) -- (-3.5, 1)to[in=180, out=-90] (-2.75, 0.5);
        \draw[densely dotted,->, line width=0.55mm, black]  (-3.75, 0.25) -- (-2.75, 0.25);
        \draw[densely dashed,->, line width=0.55mm, BlueGreen] (-1.25, 0.5) -- (0.25, 0.5);
        \draw[densely dotted,->, line width=0.55mm, black] (-1.25, 0.25) -- (-0.75, 0.25) to[in=90, out=0] (-0.25, -0.5);
        \draw[densely dashed,->, line width=0.55mm, BlueGreen] (2.75, 0.5) -- (3.5, 0.5);

        \foreach \x in {0.65, 0.925, 1.2,1.475}
        \draw[thick,black,fill=mycolor1] (\x, 0.075) rectangle (\x+0.2, 0.925);

        \draw[ thick,pattern=checkerboard, pattern color=RubineRed] (-0.7,3.6) rectangle (-0.5,4.4) ;
        \draw[ thick, black,fill=BlueGreen] (-0.8,3.6) rectangle (-1,4.4) ;
        \draw[ thick, pattern=crosshatch, pattern color=black] (-3.9,0.7) rectangle (-4.1,-0.1) ;

        \node[RubineRed] at (-0.2,3.75){$\CP$};
        \node[RubineRed] at (-0.5,2.75){$\lambda$};
        \node[BlueGreen] at (-1.35,3.75){$\CS$};
        \node[black] at (-4.4,0.25){$\CA$};
        \node[BlueGreen] at (-1.3,3.1){$\lambda$};

        \draw[line width=0.55mm,fill=mycolor2]  (2.25,0.5) circle [radius=0.5];

        \draw[line width=0.55mm,fill=mycolor1] (-2.25,-0.35) arc[start angle=180, end angle=360,radius=0.5cm] --(-1.25,-0.35) -- (-1.25,2.2) -- (-2.25, 2.2) -- (-2.25,-0.35);

        \draw[line width=0.55mm,fill=mycolor2]  (-1.75,2.2) circle [radius=0.5];

        \draw[line width=0.55mm,fill=mycolor2]  (-1.75,0.5) circle [radius=0.5];

        \node at (-1.75,-0.25) {\huge\vdots};
        \node at (-1.75, 1.45) {\huge\vdots};

        \node at (1.5, -1.25) {$\mathrm{Q_M}$: $\mathrm{M^T/G/1}$};
        \node at (-2.5, -1.25) {$\mathrm{Q_A}$: $\mathrm{M/G/\infty}$};
        \draw[line width=0.55mm] (-3,0) -- (-2.5,0) -- (-2.5,1) -- (-3, 1);
        \draw[line width=0.55mm] (0,0) -- (1.75,0) -- (1.75,1) -- (0, 1);

    \end{tikzpicture}
    \caption{The auxiliary $\mathrm{M/G/\infty}$ queue ($\mathrm{Q_A}$) models the triggering delays for $\CS$ customers before arriving at the main queue ($\mathrm{Q_M}$). To facilitate the analysis, we introduce a class of auxiliary customers ($\CA$) that experience the same triggering delays but exit the system entirely.}
    \label{fig:ph:aux_components}
\end{figure}

As argued in Section~\ref{sec:intro}, no general exact solution is currently available for the $\mathrm{M^T/G/1}$ queue. We therefore pursue a stochastic bounding approach. Accordingly, this section derives stochastic bounds on the workload distribution observed by arriving $\CS$ customers, whose arrival process does not satisfy the PASTA property. To this end, we first introduce foundational auxiliary constructions and two embedded workload processes. We place these processes on the same probability space and construct a sample-path coupling that yields the key stochastic ordering underlying the analysis. The auxiliary construction preserves the original queueing dynamics while enabling a direct comparison between the workload distribution observed by arriving $\CS$ customers and the stationary workload distribution of the queue. This comparison provides the basis for deriving stochastic upper and lower bounds on the system-time distributions of both customer classes in Section~\ref{sec:bounds}.

The construction relies on two auxiliary components, as illustrated in Fig.~\ref{fig:ph:aux_components}.
\begin{itemize}
    \item A conceptual $\mathrm{M/G/\infty}$ queue, referred to as the \textit{auxiliary queue}, $\mathrm{Q_A}$. This queue does not alter the system dynamics but provides a clear framework for tracking triggering delays. As shown in Fig.~\ref{fig:ph:aux_components}, when a $\CP$ arrives at the \textit{main queue}, $\mathrm{Q_M}$, its corresponding $\CS$ simultaneously arrives at $\mathrm{Q_A}$. The ``service time'' in $\mathrm{Q_A}$ is exactly the random triggering delay $Y$. Upon service completion at $\mathrm{Q_A}$, the $\CS$ arrives at $\mathrm{Q_M}$.

    \item \textit{Auxiliary customers}, denoted by $\CA$. As illustrated in Fig.~\ref{fig:ph:aux_components}, these $\CA$ customers arrive at $\mathrm{Q_A}$ according to a causally independent Poisson process with an arbitrary positive rate $\lambda_{\CA} > 0$. Upon completion of their triggering delay, each $\CA$ customer leaves the system entirely rather than joining $\mathrm{Q_M}$. Since $\mathrm{Q_A}$ is an infinite-server queue, customers neither wait nor interact within it. Consequently, introducing $\CA$ customers does not affect the evolution of $\mathrm{Q_M}$; it serves solely as an analytical device.
\end{itemize}

Our analytical strategy proceeds as follows. Using the auxiliary customer class, we construct two embedded workload processes: a \emph{reference process} (RP) initiated by the arrival of a tagged $\CA$ at $\mathrm{Q_A}$, and a \emph{target process} (TP) initiated by the arrival of a tagged $\CS$ at $\mathrm{Q_A}$. We first identify the exact initial states of both processes at $t=0^+$ (that is, the moment immediately after the respective customer arrives at $\mathrm{Q_A}$). We then couple the RP and TP on a common probability space so that they evolve under identical realizations of all underlying random primitives, allowing us to track their workload sample paths over the shared triggering-delay interval $(0, Y)$. Finally, we compare the workload of $\mathrm{Q_M}$ in each process at time $Y^-$ (that is, the moment immediately before the respective customer departs from $\mathrm{Q_A}$). This comparison reveals the precise sample-path relationship between the workload of $\mathrm{Q_M}$ seen by the tagged $\CS$ and that observed by the tagged $\CA$ upon the completion of their respective triggering delays.

By formalizing this sample-path comparison, we derive Theorem~\ref{thm:ph:conditional_dist}, which establishes an exact distributional relationship between the workload seen by a $\CS$ upon arriving at $\mathrm{Q_M}$ and the true steady-state workload of $\mathrm{Q_M}$ (which corresponds to the workload seen by a $\CA$ in our analysis). As a consequence, Corollary~\ref{cor:ordering} shows that in the $\mathrm{M^T/G/1}$ queue, the workload observed by an arriving $\CS$ is stochastically sandwiched between the steady-state workload and the steady-state workload plus its corresponding $\CP$ service requirement. These results provide the foundation for the upper and lower bounds on the workload and system-time distributions of each customer class developed in the subsequent section.

\subsection{Embedded Processes and Coupling Construction}
\label{subsec:ph:init_state}

Let $V_{\mathrm{M}}(t)$ denote the workload process of $\mathrm{Q_M}$ at time $t$, whose sample paths are right-continuous with left limits (c\`adl\`ag). The state of $\mathrm{Q_A}$ is represented by the vector of residual lifetimes, ${\boldsymbol V}_{\mathrm{A}}(t) = (v_1(t), v_2(t), \dots, v_k(t))$, where $k$ denotes the total number of customers (both $\CS$ and $\CA$) currently in $\mathrm{Q_A}$, and $v_i(t)$ denotes the residual triggering delay of the $i$-th customer at time $t$. The joint process $({\boldsymbol V}_{\mathrm{A}}(t), V_{\mathrm{M}}(t))$ is a continuous-time process defined on an appropriate general state space.

We work with a stationary and ergodic version of this joint process. Accordingly, the experience of a typical customer is characterized by the corresponding Palm distribution for its arrival class~\cite{baccelli2013elements}. Let $V_{\mathrm M}$ denote the stationary workload of $\mathrm{Q_M}$, and let $\boldsymbol V_{\mathrm A}$ denote the stationary vector of residual triggering delays in $\mathrm{Q_A}$.

We introduce two embedded versions of the workload process that differ only in their starting epochs. Specifically, consider a tagged $\CA$ and a tagged $\CP$--$\CS$ pair:
\begin{itemize}
    \item \textit{Reference process (RP)}:
    \[
    \{(\boldsymbol V^{\mathrm{RP}}_{\mathrm A}(t),V^{\mathrm{RP}}_{\mathrm M}(t)):t\ge0\},
    \]
    where $t=0$ is the arrival epoch of the tagged $\CA$ at $\mathrm{Q_A}$.
    
    \item \textit{Target process (TP)}:
    \[
    \{(\boldsymbol V^{\mathrm{TP}}_{\mathrm A}(t),V^{\mathrm{TP}}_{\mathrm M}(t)):t\ge0\},
    \]
    where $t=0$ is the epoch at which the tagged $\CP$  arrives at $\mathrm{Q_M}$ and its corresponding tagged $\CS$ simultaneously enters $\mathrm{Q_A}$.
\end{itemize}

We first characterize the initial states of these processes at time $t=0^+$.

\textit{Initial state of RP}: Since the arrival process of $\CA$ customers is a causally independent Poisson process, the PASTA property holds. Consequently, the tagged $\CA$, immediately after arriving at $\mathrm{Q_A}$ (i.e., at time $t=0^+$), sees $\mathrm{Q_M}$ in its steady-state workload $V_{\mathrm{M}}$. At the same instant, it observes $\mathrm{Q_A}$ in its stationary state, augmented by its own triggering delay $Y$. Thus,
\begin{equation} \label{eq:state_CA}
    \big(\boldsymbol{V}^{\mathrm{RP}}_{\mathrm{A}}(0^+), V^{\mathrm{RP}}_{\mathrm{M}}(0^+)\big)\overset{\mathrm{d}}{=} \big({\boldsymbol V}_{\mathrm{A}}\oplus Y,\ V_{\mathrm{M}}\big),
\end{equation}
where $\oplus$ denotes appending the new triggering delay $Y$.

\textit{Initial state of TP}: Upon the arrival of a $\CS$ at $\mathrm{Q_A}$, its corresponding $\CP$ arrives simultaneously at $\mathrm{Q_M}$. Let $(X^{\star}_{\CP}, X^{\star}_{\CS})\overset{\mathrm{d}}{=}(X_{\CP}, X_{\CS})$
denote the service requirements of the tagged pair.
Hence, the tagged $\CS$, immediately after arriving at $\mathrm{Q_A}$ (i.e., at time $t=0^+$), sees $\mathrm{Q_M}$ with workload $V_{\mathrm{M}} + X^{\star}_{\CP}$ --- i.e., the steady-state workload plus the service requirement of its corresponding tagged $\CP$, which arrived simultaneously at $\mathrm{Q_M}$. It sees $\mathrm{Q_A}$ in the same way as a $\CA$ arrival does, i.e., ${\boldsymbol V}_{\mathrm{A}}\oplus Y$. Thus,
\begin{equation} \label{eq:state_CS}
    \big(\boldsymbol{V}^{\mathrm{TP}}_{\mathrm{A}}(0^+), V^{\mathrm{TP}}_{\mathrm{M}}(0^+)\big)\overset{\mathrm{d}}{=} \big({\boldsymbol V}_{\mathrm{A}}\oplus Y,\ V_{\mathrm{M}} + X^{\star}_{\CP}\big).
\end{equation}

Observe that the (pre-arrival) steady-state workload of $\mathrm{Q_M}$ at time $t=0^-$, denoted by $V_{\mathrm M}$, is independent of the tagged service pair $(X^{\star}_{\CP},X^{\star}_{\CS})$. This follows because the tagged service requirements are assigned before arrival and are independent of the past trajectory of the workload process.

To make the analysis tractable, we \emph{couple} the RP and TP on a common probability space so that both processes experience the same random events from $t=0^+$ onward. Specifically, the coupling is constructed so that the two processes share the same realization of the initial states~\eqref{eq:state_CA} and~\eqref{eq:state_CS}, the same triggering delays, the same sequence of future $\CP$ arrivals and their corresponding $\CS$ arrivals, the same sequence of $\CA$ arrivals, and the same sequence of paired service requirements $\{(X_{\CP, i}, X_{\CS, i})\}_{i\geq 1}$. The only difference between the two processes at time $0^+$ is therefore the extra initial workload $X^\star_{\CP}$ present in $\mathrm{Q_M}$ of the TP, which yields
\begin{equation}
V^{\mathrm{TP}}_{\mathrm M}(0^+)=V^{\mathrm{RP}}_{\mathrm M}(0^+)+X^\star_{\CP}, \qquad \text{a.s.}
\end{equation}
Furthermore, both processes start from the identical initial configuration of $\mathrm{Q_A}$, namely ${\boldsymbol V}_{\mathrm A}\oplus Y$. We use this fact to show that the cumulative workload input received by $\mathrm{Q_M}$ during the triggering-delay interval $(0, Y)$ is identical in the RP and TP. 

\begin{lemma}\label{lem:ph:work_equiv}
Under the constructed coupling, the cumulative workload inputs to $\mathrm{Q_M}$ are almost surely (a.s.) identical for both RP and TP:
\begin{equation*}
A_{\mathrm{RP}}(t)=A_{\mathrm{TP}}(t)\coloneqq\bar A(t),
\qquad t \in (0,Y).
\end{equation*}
\end{lemma}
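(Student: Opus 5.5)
We prove the lemma by expressing the cumulative workload input to $\mathrm{Q_M}$ over $(0,t]$, for $t\in(0,Y)$, as a deterministic functional of the shared random primitives. We then check that this functional is literally the same for RP and TP. For either process, define
\[
A(t)=\sum_{i:\,T_i\in(0,t]} X_{\CP,i}+\sum_{j:\,\tau_j\in(0,t]} X_{\CS,j},
\]
where the first sum runs over the new $\CP$ arrivals to $\mathrm{Q_M}$ during $(0,t]$. The second sum runs over the $\CS$ customers that complete their triggering delay in $\mathrm{Q_A}$ during $(0,t]$, with $\tau_j$ their completion epochs. The tagged $\CP$ arrives at $t=0$ and is therefore already part of the initial state $V^{\mathrm{TP}}_{\mathrm M}(0^+)$. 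It is thus excluded from $A_{\mathrm{TP}}$, which is exactly why the initial states differ by $X^\star_{\CP}$ while the inputs do not.

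The key steps, in order, are as follows. First, the future $\CP$ arrival epochs and their service pairs are shared by construction of the coupling, so the first sum coincides in RP and TP. Second, the $\CS$ customers completing in $(0,t]$ come from two sources:
\begin{enumerate}[label=(\roman*)]
\item customers already present in $\mathrm{Q_A}$ at $0^+$, whose residual delays are given by the common configuration $\boldsymbol V_{\mathrm A}\oplus Y$;
\item customers triggered by new $\CP$ arrivals in $(0,t]$, which share arrival epochs and delays $Y_i$.
\end{enumerate}
Hence the completion epochs $\tau_j$ and the associated $X_{\CS,j}$ agree in both processes. The associated $X_{\CS,j}$ come from the shared paired sequence, together with the service requirements attached to the pending secondaries in the common initial state.

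Third, we handle the appended entry $Y$, which is the only component whose role differs between the processes. In RP it is the tagged $\CA$, which leaves without joining $\mathrm{Q_M}$. In TP it is the tagged $\CS$, which joins $\mathrm{Q_M}$ and contributes $X^\star_{\CS}$. Both events occur only at time $Y$, outside the half-open interval $(0,Y)$, so for $t<Y$ this entry contributes nothing to either $A(t)$. Similarly, $\CA$ departures contribute no work in either process. Together these steps give $A_{\mathrm{RP}}(t)=A_{\mathrm{TP}}(t)$ pathwise for all $t\in(0,Y)$ on the coupling event, that is, almost surely.

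The main obstacle is making the identification of the initial $\mathrm{Q_A}$ configuration rigorous. Equations \eqref{eq:state_CA} and \eqref{eq:state_CS} only assert equality in distribution, so we must argue that the coupling can realize both $\boldsymbol V_{\mathrm A}\oplus Y$ as the same random element. This requires the marks (class labels $\CS$/$\CA$ and the attached $X_{\CS}$ of pending secondaries) to have the same joint law under both Palm views. We would first justify this from the independence of the $\mathrm{Q_A}$ state from the current $\CP$/$\CA$ arrival: PASTA for $\CA$, and lack of anticipation for the Poisson $\CP$ stream. The pre-arrival joint state at $0^-$ then has the stationary law in both cases. We would take a single draw of it, together with a single draw of $(X^\star_{\CP},X^\star_{\CS},Y)$, as the common initial condition. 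Ties, where two events occur at the same epoch, have probability zero and do not affect the almost-sure statement.
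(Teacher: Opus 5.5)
Your proposal is correct and follows essentially the same route as the paper: you split the input over $(0,t]$ into new $\CP$ arrivals and $\CS$ departures from $\mathrm{Q_A}$, and note that both are determined by the common initial $\mathrm{Q_A}$ configuration and the shared future primitives. Your explicit treatment of the appended tagged entry (which acts only at time $Y$, outside $(0,Y)$) and of the class labels and attached $X_{\CS}$ marks of pending secondaries fills in details that the paper leaves implicit when it simply asserts that the coupling shares the same initial state.
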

\begin{proof}
At $t=0^+$, the RP and TP differ only in the initial workload of $\mathrm{Q_M}$; their initial $\mathrm{Q_A}$ states are identical (${\boldsymbol V}_{\mathrm{A}}\oplus Y$). During the interval $(0, t]$, the cumulative work arriving at $\mathrm{Q_M}$ originates from two sources:
\begin{enumerate}
    \item \emph{Future $\CP$ arrivals}: By construction, both processes are subjected to the same Poisson arrival process and the same sequence of paired service times $\{(X_{\CP, i}, X_{\CS, i})\}_{i\ge1}$.
    \item \emph{Future $\CS$ arrivals}: These arrivals correspond to departures from $\mathrm{Q_A}$. Because $\mathrm{Q_A}$ is an infinite-server queue, its departure process depends solely on its state at $t=0^+$, namely $\boldsymbol{V}_{\mathrm{A}}\oplus Y$, and on subsequent arrivals to $\mathrm{Q_A}$, both of which are identical under the coupling.
\end{enumerate}
Since the realizations of all component processes that contribute new work to $\mathrm{Q_M}$ are identical under this coupling construction, it follows that $A_{\mathrm{RP}}(t)=A_{\mathrm{TP}}(t)\coloneqq\bar{A}(t)$ a.s.\ for all $t\in(0, Y)$.
\end{proof}

We now restrict attention to the workload processes of $\mathrm{Q_M}$, denoted by $V^{\mathrm{RP}}_{\mathrm M}(t)$ and $V^{\mathrm{TP}}_{\mathrm M}(t)$. From \eqref{eq:state_CA}--\eqref{eq:state_CS}, the initial workloads of $\mathrm{Q_M}$ satisfy
\[
V^{\mathrm{TP}}_{\mathrm M}(0^+)=V^{\mathrm{RP}}_{\mathrm M}(0^+)+X^\star_{\CP}, \qquad \text{a.s.}
\]
By Lemma~\ref{lem:ph:work_equiv}, the main queues $\mathrm{Q_M}$ in the RP and TP receive identical cumulative workload inputs throughout $(0,Y)$. Given their respective initial workloads and the common input process, we use the Skorokhod reflection mapping to track the workload trajectories  $V^{\mathrm{RP}}_{\mathrm M}(t)$ and $V^{\mathrm{TP}}_{\mathrm M}(t)$ from $t=0^+$ to $t=Y^-$. This sample-path comparison yields the relationship between $V^{\mathrm{TP}}_{\mathrm M}(Y^-)$ and $V^{\mathrm{RP}}_{\mathrm M}(Y^-)$. Here, $V^{\mathrm{TP}}_{\mathrm M}(Y^-)$ is precisely the workload in $\mathrm{Q_M}$ observed by the tagged $\CS$ immediately before it arrives at $\mathrm{Q_M}$, which determines its waiting time under the FCFS discipline.

\subsection{Sample-Path Workload Comparison via Skorokhod Reflection}
\label{subsec:ph:coupling}

For a work-conserving single-server queue, the workload process admits the well-known Skorokhod reflection representation~\cite{debicki2015queues}. In particular, given an initial workload $V(0)$ and a cumulative workload input $A(t)$, the workload at time $t$ is given by
\begin{equation}
    V(t)=V(0)+A(t)-t+
    \max\left\{0,-\inf_{0\leq s\leq t}
    \left(V(0)+A(s)-s\right)\right\}.
\end{equation}

Applying this reflection mapping to our coupled processes at time $t=Y^-$, and using the common input $\bar{A}(\cdot)$ established in Lemma~\ref{lem:ph:work_equiv}, we obtain for the RP that
\begin{equation}\label{eq:ph:ref_work}
    V^{\mathrm{RP}}_{\mathrm{M}}(Y^-) = V_{\mathrm{M}} +  \bar{A}(Y^-) - Y + \max\Bigl\{0, -\inf_{s \in (0,Y)}\big(V_{\mathrm{M}} +  \bar{A}(s) - s\big)\Bigr\},
\end{equation}
and for the TP
\begin{equation}\label{eq:ph:target_work}
        V^{\mathrm{TP}}_{\mathrm{M}}(Y^-) = V_{\mathrm{M}} + X^{\star}_{\CP} +  \bar{A}(Y^-) - Y +\max\Bigl\{0, -\inf_{s \in (0,Y)}\big(V_{\mathrm{M}} + X^{\star}_{\CP} +  \bar{A}(s) - s\big)\Bigr\}.
\end{equation}

Let $I(Y^-)$ denote the infimum of the unreflected workload process of $\mathrm{Q_M}$ in the TP over the interval $(0, Y)$:
\begin{equation*}
    I(Y^-) \coloneqq \inf_{s\in(0,Y)}\big(V_{\mathrm{M}} + X^{\star}_{\CP} +  \bar{A}(s) - s\big).
\end{equation*}
The corresponding infimum in the RP is $I(Y^-)-X^\star_{\CP}$, because the two unreflected paths differ by the constant $X^\star_{\CP}$. Substituting these infima into~\eqref{eq:ph:ref_work} and~\eqref{eq:ph:target_work}, the workload difference at time $Y^-$ becomes
\begin{align}
    \Delta
    \coloneqq& V^{\mathrm{TP}}_{\mathrm{M}}(Y^-) - V^{\mathrm{RP}}_{\mathrm{M}}(Y^-)\nonumber \\ 
    =& X^{\star}_{\CP} + \max\{0, -I(Y^-)\} - \max\{0, X^{\star}_{\CP}-I(Y^-)\} \nonumber \\ 
    =& \min\bigl\{ X^{\star}_{\CP}, \max(0, I(Y^-)) \bigr\}. \label{eq:delta_identity}
\end{align}
The final equality follows by considering the three possible regimes for $I(Y^-)$. Consequently,
\begin{equation}
    V^{\mathrm{TP}}_{\mathrm{M}}(Y^-)= V^{\mathrm{RP}}_{\mathrm{M}}(Y^-) + \Delta ,
\end{equation}
where
\begin{equation}\label{eq:ph:workload_relation}
    \Delta = 
    \begin{cases}
    0, & \text{if } I(Y^-) \le 0,\\
    I(Y^-), & \text{if } 0 < I(Y^-) < X^{\star}_{\CP},\\
    X^{\star}_{\CP}, & \text{if } I(Y^-) \ge X^{\star}_{\CP}.
    \end{cases}
\end{equation}
In particular, since $0 \le \Delta \le X^{\star}_{\CP}$ pathwise,
\begin{equation}\label{eq:bounds}
    V^{\mathrm{RP}}_{\mathrm{M}}(Y^-)\leq
    V^{\mathrm{TP}}_{\mathrm{M}}(Y^-)\leq
    V^{\mathrm{RP}}_{\mathrm{M}}(Y^-)+X^{\star}_{\CP},
    \qquad\text{a.s.}
\end{equation}

Thus, under the coupling, the workload in $\mathrm{Q_M}$ seen by the tagged $\CS$ arrival is always at least as large as that in the RP, and the excess is bounded above by the service requirement of the corresponding tagged $\CP$.

\begin{figure*}
    \centering
\begin{tikzpicture}
    \node[inner sep=8pt] (legend) {
        \begin{tabular}{rlrl}
        \tikz\draw[blue!90!black, line width=1.5pt] (0,0) -- (0.6,0); &\hspace{-0.5cm} \footnotesize $V^{\mathrm{RP}}_{\mathrm{M}}(t)$ \quad\quad\quad &
        \tikz\draw[red!80!black, line width=1.5pt, dashed] (0,0) -- (0.6,0); &\hspace{-0.5cm} \footnotesize  $V^{\mathrm{TP}}_{\mathrm{M}}(t)$ 
        \end{tabular}
    };
\end{tikzpicture}
\vspace{-0.25cm}

\subfloat[Fully Served ($I(Y^-) \le 0$).\label{fig:ph:case1}]{%
    \resizebox{0.325\textwidth}{!}{%
    \begin{tikzpicture}[font=\large]
        \useasboundingbox (-1.0, -0.3) rectangle (10.8, 6.5);
        
        \draw[help lines, gray, dotted, thick] (0,0) grid (10,6);
        
        \fill[red!10]
            (0,2) -- (1,1) -- (1,3) -- (3.5,0.5) -- (3.5,1.5) -- (5,0) -- (6.5,0) -- (6.5,1) -- (7.5,0) -- (8.5,0) --
            (8.5,0) -- (6.5,2) -- (6.5,1) -- (5,2.5) -- (3.5,4) -- (3.5,3) -- (1,5.5) -- (1,3.5) -- (0,4.5) -- cycle;

        \draw[->, line width=2pt] (-0.2,0) -- (10.5,0) node[right] {$t$};
        \draw[->, line width=2pt] (0,-0.2) -- (0,6.3) node[above] {$V_{\mathrm{M}}(t)$};
        \node[below left] at (0,0) {$0$};
        
        \draw[densely dotted, line width=1.7pt, gray!80] (9.5, 0) -- (9.5, 6.0);
        \node[below] at (9.5, -0.1) {$Y$};
        
        \draw[line width=1.2pt] (-0.15, 2) -- (0.15, 2) node[left=4pt] {$V_{\mathrm{M}}$};
        \draw[line width=1.2pt] (-0.15, 4.5) -- (0.15, 4.5) node[left=4pt] {$V_{\mathrm{M}} \!+\! X^{\star}_{\CP}$};

        \draw[blue!90!black, line width=2pt] 
            (0,2) -- (1,1) -- (1,3) -- (3.5,0.5) -- (3.5,1.5) -- (5,0) -- (6.5,0) -- (6.5,1) -- (7.5,0) -- (9,0) -- (9,1.5) -- (9.5,1);
            
        \draw[red!80!black, line width=2pt, dashed] 
            (0,4.5) -- (1,3.5) -- (1,5.5) -- (3.5,3) -- (3.5,4) -- (5,2.5) -- (6.5,1) -- (6.5,2) -- (8.5,0) -- (9,0) -- (9,1.5) -- (9.5,1);

        \draw[<->, >=stealth, line width=1.5pt] (0.35, 2.05) -- (0.35, 4.45) 
            node[midway, right, fill=red!10, inner sep=1.5pt, rounded corners=1pt] {$X^{\star}_{\CP}$};

    \end{tikzpicture}%
    }%
}\hfill
\subfloat[Partially Served ($0 < I(Y^-) < X^{\star}_{\CP}$).\label{fig:ph:case2}]{%
    \resizebox{0.325\textwidth}{!}{%
    \begin{tikzpicture}[font=\large]
        \useasboundingbox (-1.0, -0.3) rectangle (10.8, 6.5);
        
        \draw[help lines, gray, dotted, thick] (0,0) grid (10,6);
        
        \fill[red!10]
            (0,2) -- (1,1) -- (1,3) -- (3.5,0.5) -- (3.5,1.5) -- (5,0) -- (6,0) -- (6,1) -- (7,0) -- (8,0) -- (8,2) -- (9.5,0.5) --
            (9.5,1) -- (8,2.5) -- (8,0.5) -- (7,1.5) -- (6,2.5) -- (6,1.5) -- (5,2.5) -- (3.5,4) -- (3.5,3) -- (1,5.5) -- (1,3.5) -- (0,4.5) -- cycle;

        \draw[->, line width=2pt] (-0.2,0) -- (10.5,0) node[right] {$t$};
        \draw[->, line width=2pt] (0,-0.2) -- (0,6.3) node[above] {$V_{\mathrm{M}}(t)$};
        \node[below left] at (0,0) {$0$};
        
        \draw[densely dotted, line width=1.7pt, gray!80] (9.5, 0) -- (9.5, 6.0);
        \node[below, ] at (9.5, -0.1) {$Y$};
        
        \draw[line width=1.2pt] (-0.15, 2) -- (0.15, 2);
        \draw[line width=1.2pt] (-0.15, 4.5) -- (0.15, 4.5);

        \draw[blue!90!black, line width=2pt] 
            (0,2) -- (1,1) -- (1,3) -- (3.5,0.5) -- (3.5,1.5) -- (5,0) -- (6,0) -- (6,1) -- (7,0) -- (8,0) -- (8,2) -- (9.5,0.5);
            
        \draw[red!80!black, line width=2pt, dashed] 
            (0,4.5) -- (1,3.5) -- (1,5.5) -- (3.5,3) -- (3.5,4) -- (5,2.5) -- (6,1.5) -- (6,2.5) -- (7,1.5) -- (8,0.5) -- (8,2.5) -- (9.5,1);

        \draw[<->, >=stealth, line width=1.5pt] (0.35, 2.05) -- (0.35, 4.45) 
            node[midway, right, fill=red!10, inner sep=1.5pt, rounded corners=1pt] {$X^{\star}_{\CP}$};
        
        \draw[<->, >=stealth, line width=1.5pt] (9.7, 0.5) -- (9.7, 1);
        \node[right] at (9.7, 0.75) {$I(Y^-)$};

    \end{tikzpicture}%
    }%
}\hfill
\subfloat[Unserved ($I(Y^-) \ge X^{\star}_{\CP}$).\label{fig:ph:case3}]{%
    \resizebox{0.325\textwidth}{!}{%
    \begin{tikzpicture}[font=\large]
        \useasboundingbox (-1.0, -0.3) rectangle (10.8, 6.5);
        
        \draw[help lines, gray, dotted, thick] (0,0) grid (10,6);
        
        \fill[red!10]
            (0,2) -- (1,1) -- (1,3) -- (3.5,0.5) -- (3.5,2.5) -- (5,1) -- (5,2) -- (6,1) -- (6,2) -- (7.5,0.5) -- (7.5,2.5) -- (9.5,0.5) --
            (9.5,3) -- (7.5,5) -- (7.5,3) -- (6,4.5) -- (6,3.5) -- (5,4.5) -- (5,3.5) -- (3.5,5) -- (3.5,3) -- (1,5.5) -- (1,3.5) -- (0,4.5) -- cycle;

        \draw[->, line width=2pt] (-0.2,0) -- (10.5,0) node[right] {$t$};
        \draw[->, line width=2pt] (0,-0.2) -- (0,6.3) node[above] {$V_{\mathrm{M}}(t)$};
        \node[below left] at (0,0) {$0$};
        
        \draw[densely dotted, line width=1.7pt, gray!80] (9.5, 0) -- (9.5, 6.0);
        \node[below, ] at (9.5, -0.1) {$Y$};
        
        \draw[line width=1.2pt] (-0.15, 2) -- (0.15, 2);
        \draw[line width=1.2pt] (-0.15, 4.5) -- (0.15, 4.5);

        \draw[blue!90!black, line width=2pt] 
            (0,2) -- (1,1) -- (1,3) -- (3.5,0.5) -- (3.5,2.5) -- (5,1) -- (5,2) -- (6,1) -- (6,2) -- (7.5,0.5) -- (7.5,2.5) -- (9.5,0.5);
            
        \draw[red!80!black, line width=2pt, dashed] 
            (0,4.5) -- (1,3.5) -- (1,5.5) -- (3.5,3) -- (3.5,5) -- (5,3.5) -- (5,4.5) -- (6,3.5) -- (6,4.5) -- (7.5,3) -- (7.5,5) -- (9.5,3);

        \draw[<->, >=stealth, line width=1.5pt] (0.35, 2.05) -- (0.35, 4.45) 
            node[midway, right, inner sep=1.5pt, rounded corners=1pt] {$X^{\star}_{\CP}$};
        
        \draw[<->, >=stealth, line width=1.5pt] (9.7, 0.55) -- (9.7, 2.95);
        \node[right] at (9.7, 1.75) {$X^{\star}_{\CP}$};
    \end{tikzpicture}%
    }%
}
    \caption{Sample-path coupling over the triggering interval $Y$, illustrating the three regimes in Theorem~\ref{thm:ph:conditional_dist}. The RP and TP evolve under identical realizations of all future primitives.}
    \label{fig:ph:coupling_cases}
\end{figure*}

Since the workload of a work-conserving single-server queue is invariant across all work-conserving service disciplines~\cite[Ch.~3.4, p.~113]{Kleinrock1976Volume2}, the structure of \eqref{eq:ph:workload_relation} is best understood by considering a work-conserving discipline in which the tagged $\CP$ has the lowest priority relative to all other customers (both the pre-existing arrivals and all arrivals during $(0, Y)$). Under this discipline, the three cases of \eqref{eq:ph:workload_relation} describe the degree of erosion of the initial workload difference $X^{\star}_{\CP}$:

\begin{itemize}
    \item \emph{Fully served} ($I(Y^-)\le 0$): The extra work contributed by the tagged $\CP$ is fully exhausted by $t=Y^-$ (with the queue emptying at least once during $(0, Y)$; see Fig.~\ref{fig:ph:case1}). Hence, $V^{\mathrm{TP}}_{\mathrm M}(Y^-)=V^{\mathrm{RP}}_{\mathrm M}(Y^-)$.

    \item \emph{Partially served} ($0<I(Y^-)<X^{\star}_{\CP}$): The tagged $\CP$ receives partial service, so only part of the initial excess workload remains (see Fig.~\ref{fig:ph:case2}). In this case, $V^{\mathrm{TP}}_{\mathrm M}(Y^-)=V^{\mathrm{RP}}_{\mathrm M}(Y^-)+I(Y^-)$.

    \item \emph{Unserved} ($I(Y^-)\ge X^{\star}_{\CP}$): The tagged $\CP$ receives no service before $t=Y^-$, and higher-priority work occupies the server throughout $(0,Y)$ (see Fig.~\ref{fig:ph:case3}). Therefore, $V^{\mathrm{TP}}_{\mathrm M}(Y^-)=V^{\mathrm{RP}}_{\mathrm M}(Y^-)+X^{\star}_{\CP}$.
\end{itemize}

Finally, note that $V^{\mathrm{RP}}_{\mathrm M}(Y^-)$ and $I(Y^-)$ are generally dependent, because both are determined by the same common input path over $(0, Y)$. By contrast, $V^{\mathrm{RP}}_{\mathrm M}(Y^-)$ is independent of the tagged pair $(X^\star_{\CP}, X^\star_{\CS})$, since no tagged primary customer is present in the RP. This distinction will be important when we pass from pathwise bounds to distributional bounds.

\subsection{Distributional Identification and Stochastic Ordering}
\label{subsec:ph:dep_proc}

The previous subsection established a pathwise relation between the workload of $\mathrm{Q_M}$ in RP and TP at the end of the triggering delay interval. We now convert this relation into a distributional statement. The key step is to relate the RP observation at time $t=Y^-$ to the steady-state workload distribution of $\mathrm{Q_M}$.

\begin{lemma}\label{lem:ph:poisson_dep}
In the $\mathrm{M/G/\infty}$ auxiliary queue $\mathrm{Q_A}$, the departure process of $\CA$ customers is a \emph{causally independent} Poisson process.
\end{lemma}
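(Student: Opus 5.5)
The plan is to prove two things separately. First, the $\CA$ departure stream is Poisson. Second, it carries no information about the history of $\mathrm{Q_M}$, which is what ``causally independent'' requires.

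\emph{Poisson property.} The $\CA$ customers arrive at $\mathrm{Q_A}$ according to a Poisson process of rate $\lambda_{\CA}$. Each is assigned an i.i.d.\ delay $Y$, independent of everything else, and $\mathrm{Q_A}$ is an infinite-server queue, so there is no interaction. The $\CA$ departure epochs are therefore the points $\{S_j+Y^{\CA}_j\}$, obtained by independently displacing the points of a Poisson process. By the displacement theorem (equivalently, marking $S_j$ with $Y^{\CA}_j$ and mapping the marked Poisson process on $\mathbb R\times[0,\infty)$ through $(s,y)\mapsto s+y$), the image is a Poisson process. Its intensity is $\lambda_{\CA}\int\Prob(t-Y\in \diff s)/\diff t=\lambda_{\CA}$. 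In the stationary (time-$-\infty$ started) regime this holds on all of $\mathbb R$. Finiteness of $\E[Y]$ ensures the stationary version is well defined.

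\emph{Causal independence.} The workload of $\mathrm{Q_M}$, together with all $\CP$ and $\CS$ epochs, is a measurable function of three primitives: the $\CP$ Poisson process, the triggering delays $\{Y_i\}$ and the service pairs. The $\CA$ points $\{(S_j,Y^{\CA}_j)\}$ form a marked Poisson process that is independent of all these primitives by construction. Hence the whole $\CA$ departure process is independent of the entire trajectory $\{(\boldsymbol V^{\CP,\CS}_{\mathrm A}(t),V_{\mathrm M}(t))\}_{t\in\mathbb R}$, where $\boldsymbol V^{\CP,\CS}_{\mathrm A}$ is the residual-delay vector restricted to $\CS$ customers. The only coupling with $\mathrm{Q_A}$ is through the $\CA$ entries themselves, which never affect $\mathrm{Q_M}$. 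In particular, future $\CA$ departures after time $t$ are independent of $\{V_{\mathrm M}(s):s\le t\}$ and of all $\CP$/$\CS$ arrival epochs. This is exactly the lack-of-anticipation condition in Definition~\ref{def:exp:causal_indep}, and it is in fact stronger.

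I then conclude that PASTA applies. A $\CA$ departing at time $t$ sees $V_{\mathrm M}(t^-)\overset{\mathrm d}{=}V_{\mathrm M}$. This is what allows $V^{\mathrm{RP}}_{\mathrm M}(Y^-)$ to be identified as the stationary workload.

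The main obstacle is the subtlety of the Palm view used in the reference process. There, the departure is that of a tagged $\CA$ whose arrival is conditioned at $t=0$, and the quantity needed is what it sees at time $Y$. I would handle this with Slivnyak's theorem. Under the Palm measure of the marked $\CA$ process, the tagged point plus the rest of the $\CA$ process is again Poisson, still independent of the $\mathrm{Q_M}$ primitives. The tagged departure epoch $Y$ is then a point of the (Palm) departure process, independent of $\mathrm{Q_M}$'s stationary trajectory. By the independence above, $V_{\mathrm M}(Y^-)$ under this conditioning has the stationary law. I would verify this carefully, because $V^{\mathrm{RP}}_{\mathrm M}(0^+)$ and $V^{\mathrm{RP}}_{\mathrm M}(Y^-)$ are dependent. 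The marginal of the latter is nonetheless stationary, since time-shifting the stationary $\mathrm{Q_M}$ trajectory by the independent random amount $Y$ preserves its law.
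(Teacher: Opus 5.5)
Your proposal is correct and follows essentially the same route as the paper. The displacement theorem gives the Poisson property of the $\CA$ departure stream. Causal independence, and hence PASTA, comes from the $\CA$ arrival epochs and delays being independent of the primitives that drive $\mathrm{Q_M}$: the $\CP$ epochs, the triggering delays and the service pairs.

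Your additional Palm/Slivnyak discussion goes further than the paper, which simply invokes PASTA. In particular, you note that $V^{\mathrm{RP}}_{\mathrm M}(Y^-)$ has the stationary law because the stationary $\mathrm{Q_M}$ trajectory is evaluated at a random time independent of it. This is a more careful justification of how the lemma is used later, but it does not change the argument for the lemma itself.
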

\begin{proof}
 See Appendix~\ref{app:poisson_dep}
\end{proof}

By Lemma~\ref{lem:ph:poisson_dep}, the departure process of $\CA$ customers from $\mathrm{Q_A}$ is a \emph{causally independent} Poisson process. Therefore, by PASTA, the tagged $\CA$ sees $\mathrm{Q_M}$ with steady-state workload $V_{\mathrm{M}}$ at time $Y^-$: $V^{\mathrm{RP}}_{\mathrm{M}}(Y^-) \overset{\mathrm{d}}{=} V_{\mathrm{M}}$.

By contrast, the departure process of $\CS$ customers from $\mathrm{Q_A}$ (equivalently, their arrival process at $\mathrm{Q_M}$) is \emph{not} independent of the workload process of $\mathrm{Q_M}$. Let $W_{\CS}$ denote the workload seen by an arriving $\CS$ at $\mathrm{Q_M}$, so that $V^{\mathrm{TP}}_{\mathrm{M}}(Y^-) \overset{\mathrm{d}}{=} W_{\CS}$.

Combining these observations with \eqref{eq:ph:workload_relation} gives an exact structural representation of the workload seen by a $\CS$. It is not yet a closed-form distribution, because the two terms in the representation are dependent.

\begin{theorem}\label{thm:ph:conditional_dist}
In the $\mathrm{M^T/G/1}$ queue, the workload seen by an arriving $\CS$, denoted by $W_{\CS}$, satisfies the distributional relationship
\begin{equation}
    W_{\CS} \overset{\mathrm{d}}{=} V^{\mathrm{RP}}_{\mathrm M}(Y^-) + \Delta,
\end{equation}
where $V^{\mathrm{RP}}_{\mathrm M}(Y^-)\overset{\mathrm{d}}{=}V_{\mathrm M}$ and $\Delta = \min\bigl\{ X^{\star}_{\CP}, \max\{0, I(Y^-)\} \bigr\}$.
\end{theorem}

The key structural insight is that $V^{\mathrm{RP}}_{\mathrm M}(Y^-)$ is \emph{not} independent of $\Delta$. This dependence is natural: for instance, when $0<I(Y^-)<X^{\star}_{\CP}$, a small value of $I(Y^-)$ indicates that the unreflected workload trajectory remained close to zero over $(0, Y)$, which is associated with a smaller $V^{\mathrm{RP}}_{\mathrm M}(Y^-)$. Knowing their separate marginal distributions is therefore insufficient to evaluate the sum. The pathwise bounds $0\le\Delta\le X^\star_{\CP}$ avoid this difficulty and imply the following stochastic order.

\begin{corollary}\label{cor:ordering}
The workload seen by an arriving $\CS$ satisfies the stochastic ordering
\begin{equation*}
    V_{\mathrm{M}} \le_{\mathrm{st}} W_{\CS} \le_{\mathrm{st}} V_{\mathrm{M}} + X^{\star}_{\CP},
\end{equation*}
where $X^{\star}_{\CP}\overset{\mathrm{d}}{=}X_{\CP}$ and $X^{\star}_{\CP}$ is independent of $V_{\mathrm{M}}$.
\end{corollary}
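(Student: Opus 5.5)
The plan is to pass from the pathwise sandwich \eqref{eq:bounds} to the usual stochastic order. Under the coupling of Section~\ref{subsec:ph:init_state}, \eqref{eq:bounds} holds almost surely:
\[
V^{\mathrm{RP}}_{\mathrm M}(Y^-)\le V^{\mathrm{TP}}_{\mathrm M}(Y^-)\le V^{\mathrm{RP}}_{\mathrm M}(Y^-)+X^\star_{\CP}.
\]
An almost-sure ordering on a common probability space implies the usual stochastic order between the marginal laws. Concretely, for every $x$, $\{V^{\mathrm{RP}}_{\mathrm M}(Y^-)>x\}\subseteq\{V^{\mathrm{TP}}_{\mathrm M}(Y^-)>x\}$, so the tail probabilities compare. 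By Theorem~\ref{thm:ph:conditional_dist} and the discussion preceding it, $V^{\mathrm{TP}}_{\mathrm M}(Y^-)\overset{\mathrm d}{=}W_{\CS}$ and $V^{\mathrm{RP}}_{\mathrm M}(Y^-)\overset{\mathrm d}{=}V_{\mathrm M}$. This already gives the left inequality $V_{\mathrm M}\le_{\mathrm{st}}W_{\CS}$.

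For the right inequality, the same argument gives $W_{\CS}\le_{\mathrm{st}}V^{\mathrm{RP}}_{\mathrm M}(Y^-)+X^\star_{\CP}$. The remaining task is to identify the law of this sum with that of $V_{\mathrm M}+X^\star_{\CP}$, where the two summands are independent. The marginals are known: $V^{\mathrm{RP}}_{\mathrm M}(Y^-)\overset{\mathrm d}{=}V_{\mathrm M}$ by Lemma~\ref{lem:ph:poisson_dep} and PASTA, and $X^\star_{\CP}\overset{\mathrm d}{=}X_{\CP}$. It therefore suffices to show that $V^{\mathrm{RP}}_{\mathrm M}(Y^-)$ is independent of $X^\star_{\CP}$. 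Once that holds, the joint law is the product of the marginals, which equals the law of $(V_{\mathrm M},X^\star_{\CP})$ with independent components, and the sums agree in distribution.

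The independence step is the main obstacle, because the stochastic order concerns the sum and the marginals alone do not determine it. I would argue it directly from the coupling construction. $V^{\mathrm{RP}}_{\mathrm M}(Y^-)$ is a measurable function of three things: the initial RP workload $V^{\mathrm{RP}}_{\mathrm M}(0^+)$, the tagged delay $Y$, and the common input $\bar A$ on $(0,Y)$. The input $\bar A$ is built from the state ${\boldsymbol V}_{\mathrm A}$, future $\CP$ arrivals, and future service pairs. In the RP no tagged primary is present, so none of these ingredients involves the tagged pair $(X^\star_{\CP},X^\star_{\CS})$. The pre-arrival workload is independent of the tagged pair, as observed after \eqref{eq:state_CS}. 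The remaining primitives are independent of the tagged pair by the mutual independence assumptions of Section~\ref{subsec:model:description}. In particular, the tagged $\CS$ requirement $X^\star_{\CS}$ does not enter $\mathrm{Q_M}$ before time $Y$, so it cannot appear in $\bar A$ on $(0,Y)$.

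I would state this measurability explicitly, writing $V^{\mathrm{RP}}_{\mathrm M}(Y^-)=\Phi\bigl(V^{\mathrm{RP}}_{\mathrm M}(0^+),Y,\bar A\bigr)$ via the Skorokhod formula \eqref{eq:ph:ref_work}. Independence then follows because the arguments of $\Phi$ form a family independent of $X^\star_{\CP}$. Combining the two inequalities gives the corollary.
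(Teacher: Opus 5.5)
Your proposal is correct and follows the paper's own route. You transfer the almost-sure sandwich \eqref{eq:bounds} to the marginal laws using $V^{\mathrm{RP}}_{\mathrm M}(Y^-)\overset{\mathrm d}{=}V_{\mathrm M}$ and $V^{\mathrm{TP}}_{\mathrm M}(Y^-)\overset{\mathrm d}{=}W_{\CS}$, and you obtain the independent-sum form of the upper bound from the independence of $V^{\mathrm{RP}}_{\mathrm M}(Y^-)$ and the tagged pair; the paper only asserts that independence at the end of Section~\ref{subsec:ph:coupling}, whereas you justify it through the Skorokhod representation \eqref{eq:ph:ref_work}.
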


The coupling compares a secondary arrival's workload with the stationary workload without assuming PASTA for secondary customers. The next section inserts these pathwise bounds into a level-crossing identity to obtain computable workload and system-time bounds.

\section{Workload and System-Time Distribution Bounds}
\label{sec:bounds}

This section establishes lower and upper bounds, in the usual stochastic order, on the stationary workload distribution and the class-specific system-time distributions in the $\mathrm{M^T/G/1}$ queue. We first use level-crossing arguments to derive an exact identity for the tail distribution of the stationary workload, expressing it as the sum of two contributions corresponding to $\CP$ and $\CS$ arrivals. We evaluate the contribution of $\CP$ arrivals exactly using the PASTA property, while bounding that of $\CS$ arrivals using the sample-path coupling established in Section~\ref{sec:ph}. Combining these results yields the stochastic sandwich in \eqref{eq:workload_bounds}. We then show that these bounds are asymptotically tight in the sense that the stationary workload converges in distribution to that of the lower-bound queue as the triggering delays diverge in probability to infinity, and to that of the upper-bound queue as the triggering delays converge in probability to zero. Finally, we translate these results into stochastic bounds on the class-specific system-time distributions and present a practical heuristic approximation.

\subsection{The Stationary Workload Tail Distribution}
\label{subsec:exact_workload_identity}
We begin with an exact workload identity, which will serve as the common starting point for deriving both the lower and upper bounds. Define 
\[
    \overline{B}_{\CP + \CS}(x) \coloneqq \Prob(X_{\CP} + X_{\CS} > x),
\]
where the sum is evaluated under the joint distribution of the service pair $(X_{\CP}, X_{\CS})$. Let $V_{\mathrm{M}}$ denote the stationary workload of the $\mathrm{M^T/G/1}$ queue, and define
\[
    \overline{F}_{V_{\mathrm{M}}}(x) \coloneqq \Prob(V_{\mathrm{M}} > x), \qquad x \in \mathbb{R}.
\]
As usual, we extend the tail function by setting $\overline{F}_{V_{\mathrm{M}}}(x) = 1$ for $x < 0$.

For $k \in \{\CP, \CS\}$, let $W_k$ be the workload seen by a tagged class-$k$ customer immediately before its arrival, and let $X_k^\star$ be its service requirement. Such a customer causes an up-crossing of a level $y > 0$ if $\{ W_k \le y < W_k + X_k^\star \}$. Since both $\CP$ and $\CS$ customers arrive at long-run rate $\lambda$, the total up-crossing rate at level $y$ is
\begin{equation}
\label{eq:upcrossing_rate}
    \sum_{k \in \{\CP, \CS\}} \lambda \Prob(W_k \le y < W_k + X_k^\star).
\end{equation}

An arrival can jump the workload upward across $y$, whereas service brings it downward continuously at a rate of one. In stationarity, the rates of upward and downward crossings are equal~\cite{brill2017level}. Because the workload decreases at a unit rate whenever it is positive, the down-crossing rate at level $y$ is equal to the density of the absolutely continuous component of the stationary workload distribution at $y$, denoted by $f_{V_{\mathrm{M}}}(y)$. Therefore, for almost every $y > 0$,
\begin{equation}
\label{eq:density_balance}
    f_{V_{\mathrm{M}}}(y) 
    = \sum_{k \in \{\CP, \CS\}} \lambda \Prob( W_k \le y < W_k + X_k^\star ),
    \qquad y > 0.
\end{equation}
Here we use the standard fact that the stationary workload distribution of a stable single-server queue may have an atom at the origin (with $\Prob(V_{\mathrm{M}} = 0) = 1 - \rho$) but is absolutely continuous on $(0, \infty)$. This is sufficient for the integral manipulations below.

Fix $x \ge 0$ and integrate \eqref{eq:density_balance} over $[x, \infty)$: 
\begin{align}
\label{eq:density_integrated_step_by_step}
    \overline{F}_{V_{\mathrm{M}}}(x)
    &= \int_x^\infty f_{V_{\mathrm{M}}}(y) \diff y \nonumber \\
    &= \int_x^\infty \sum_{k \in \{\CP, \CS\}} \lambda \Prob( W_k \le y < W_k + X_k^\star ) \diff y \nonumber \\
    &= \sum_{k \in \{\CP, \CS\}} \lambda \int_x^\infty \Prob( W_k \le y < W_k + X_k^\star ) \diff y \nonumber \\
    &= \sum_{k \in \{\CP, \CS\}} \lambda \int_x^\infty \E\left[ \mathbbm{1}_{\{ W_k \le y < W_k + X_k^\star \}} \right] \diff y \nonumber \\
    &= \sum_{k \in \{\CP, \CS\}} \lambda \E\left[ \int_x^\infty \mathbbm{1}_{\{ W_k \le y < W_k + X_k^\star \}} \diff y \right].
\end{align}
Since the integrands are non-negative, Tonelli's theorem justifies the interchange of expectation and integration.
We shall apply Tonelli's theorem repeatedly throughout the subsequent derivations without further explicit mention.

For any non-negative $w$ and $b$, substituting $t = y - w$ yields
\begin{align}
    \int_x^\infty \mathbbm{1}_{\{ w \le y < w + b \}} \diff y
    &= \int_{x - w}^\infty \mathbbm{1}_{\{ 0 \le t < b \}} \diff t 
       && \text{(substitute $t = y - w$)} \nonumber \\
    &= \int_0^b \mathbbm{1}_{\{ w + t > x \}} \diff t.
       && \text{(rewrite the integration domain)} \label{eq:indicator_final}
\end{align}
Therefore,
\begin{equation}
\label{eq:palm_tail}
    \overline{F}_{V_{\mathrm{M}}}(x)
    = \lambda \E\left[ \int_0^{X_{\CP}^\star} \mathbbm{1}_{\{ W_{\CP} + t > x \}} \diff t \right]
    + \lambda \E\left[ \int_0^{X_{\CS}^\star} \mathbbm{1}_{\{ W_{\CS} + t > x \}} \diff t \right],
    \qquad x \ge 0.
\end{equation}
This equation is exact and serves as the starting point for all subsequent bounds. Its two terms have fundamentally different analytical properties. On the right-hand side, the first expectation represents the contribution of primary customers in $\overline{F}_{V_{\mathrm{M}}}(x)$, while the second captures the contribution of secondary customers. Simplifying the first expectation is straightforward because primary arrivals form a causally independent Poisson process, so the PASTA property holds. As demonstrated in Subsection~\ref{subsec:primary_contribution}, this allows the \textit{exact} contribution of primary customers to be expressed as a standard convolution integral. In contrast, evaluating the \textit{exact} contribution of secondary customers is not tractable in closed form, because PASTA does not directly apply to secondary arrivals: the secondary arrival process fails the lack-of-anticipation assumption (i.e., it is not causally independent, in the sense of Definition~\ref{def:exp:causal_indep}). Therefore, Subsection~\ref{subsec:bounding_secondary} establishes explicit lower and upper bounds for the contribution of secondary customers.

\subsection{Contribution of Primary Customers}
\label{subsec:primary_contribution}

We first handle the simpler term in~\eqref{eq:palm_tail}. Since $\CP$ arrivals form a causally independent Poisson process, the workload seen by a tagged primary customer has the stationary distribution:
\begin{equation}
\label{eq:primary_pasta}
    W_{\CP} \overset{\mathrm{d}}{=} V_{\mathrm{M}}.
\end{equation}
Moreover, each $\CP$--$\CS$ pair is assigned an i.i.d. service pair $(X_{\CP}^\star, X_{\CS}^\star) \overset{\mathrm{d}}{=} (X_{\CP}, X_{\CS})$, independent of the arrival process and of the pre-arrival system state. Hence, $W_{\CP}$ is independent of the tagged service pair $(X_{\CP}^\star, X_{\CS}^\star)$, and in particular of $X_{\CP}^\star$.

Using \eqref{eq:primary_pasta} and the independence between $W_{\CP}$ and $X_{\CP}^\star$, the primary contribution in \eqref{eq:palm_tail} becomes
\begin{align}
    \lambda \E\left[ \int_0^{X_{\CP}^\star} \mathbbm{1}_{\{ W_{\CP} + t > x \}} \diff t \right] \nonumber
    &= \lambda \int_0^\infty \E\left[ \mathbbm{1}_{\{ t < X_{\CP}^\star \}} \mathbbm{1}_{\{ W_{\CP} > x - t \}} \right] \diff t \nonumber \\
    &= \lambda \int_0^\infty \Prob( t < X_{\CP}^\star, W_{\CP} > x - t ) \diff t \nonumber \\
    &= \lambda \int_0^\infty \Prob( X_{\CP}^\star > t ) \Prob( W_{\CP} > x - t ) \diff t 
        && \text{(by independence)} \nonumber \\
    &= \lambda \int_0^\infty \overline{B}_{\CP}(t) \overline{F}_{V_{\mathrm{M}}}(x - t) \diff t.
    \label{eq:primary_conv_raw_1}
\end{align}

\subsection{Bounds on Contribution of Secondary Customers}
\label{subsec:bounding_secondary}

Unlike the $\CP$ arrivals, the $\CS$ arrival process is not independent of the workload process of $\mathrm{Q_M}$. Each $\CS$ arrival is causally linked to its corresponding $\CP$ arrival, which has already contributed to the queue workload. Consequently, $\CS$ arrival epochs generally depend on the workload process, meaning that $W_{\CS}$ is generally neither distributed as $V_{\mathrm{M}}$ nor independent of the associated service pair $(X_{\CP}^\star, X_{\CS}^\star)$.

Because the workload $W_{\CS}$ seen by a secondary customer is not independent of $(X_{\CP}^\star, X_{\CS}^\star)$, obtaining an exact representation of the secondary contribution in 
\eqref{eq:palm_tail} is challenging. We therefore return to the joint sample-path coupling underlying Corollary~\ref{cor:ordering}. This is essential because a marginal stochastic order alone would not justify inserting a possibly dependent tagged service requirement into the integral.

To this end, we exploit the coupling construction of Section~\ref{subsec:ph:coupling}. Consider a tagged $\CS$ under this coupling. Let $V_{\mathrm{M}}^{\mathrm{TP}}(Y^-)$ denote the workload seen by the tagged $\CS$ in the TP, and let $V_{\mathrm{M}}^{\mathrm{RP}}(Y^-)$ denote the workload in the RP at the same epoch. The coupling relation \eqref{eq:bounds} yields 
\begin{equation}
    \label{eq:almost_sure_bound}
    V_{\mathrm{M}}^{\mathrm{RP}}(Y^-) \le V_{\mathrm{M}}^{\mathrm{TP}}(Y^-) \le V_{\mathrm{M}}^{\mathrm{RP}}(Y^-) + X_{\CP}^\star, \qquad \text{a.s.}
\end{equation}

The reference workload $V_{\mathrm{M}}^{\mathrm{RP}}(Y^-)$ is independent of the tagged service pair $(X_{\CP}^\star, X_{\CS}^\star)$ (as established in Section~\ref{sec:ph}). Since the mapping
\[
    w \mapsto \int_0^{X_{\CS}^\star} \mathbbm{1}_{\{ w + t > x \}} \diff t
\]
is non-decreasing in $w$ for each fixed $x$ and $X_{\CS}^\star$, the inequalities in \eqref{eq:almost_sure_bound} imply the following pathwise inequalities:
\begin{equation}
\label{eq:secondary_integral_pathwise}
    \int_0^{X_{\CS}^\star} \mathbbm{1}_{\{ V_{\mathrm{M}}^{\mathrm{RP}}(Y^-) + t > x \}} \diff t
    \le \int_0^{X_{\CS}^\star} \mathbbm{1}_{\{ V_{\mathrm{M}}^{\mathrm{TP}}(Y^-) + t > x \}} \diff t
    \le \int_0^{X_{\CS}^\star} \mathbbm{1}_{\{ V_{\mathrm{M}}^{\mathrm{RP}}(Y^-) + X_{\CP}^\star + t > x \}} \diff t.
\end{equation}

Recall that
\[
    V_{\mathrm{M}}^{\mathrm{TP}}(Y^-) \overset{\mathrm{d}}{=}  W_{\CS},
    \qquad
    V_{\mathrm{M}}^{\mathrm{RP}}(Y^-) \overset{\mathrm{d}}{=} V_{\mathrm{M}}.
\]
From our coupling construction, we therefore obtain the following stochastic orderings:
\begin{equation}
\label{eq:secondary_integral_stochastic_bound}
    \int_0^{X_{\CS}^\star} \mathbbm{1}_{\{ V_{\mathrm{M}} + t > x \}} \diff t
    \le_{\mathrm{st}} \int_0^{X_{\CS}^\star} \mathbbm{1}_{\{ W_{\CS} + t > x \}} \diff t
    \le_{\mathrm{st}} \int_0^{X_{\CS}^\star} \mathbbm{1}_{\{ V_{\mathrm{M}} + X_{\CP}^\star + t > x \}} \diff t,
\end{equation}
where in both bounding terms, $V_{\mathrm{M}}$ is independent of the tagged service pair $(X_{\CP}^\star, X_{\CS}^\star)$. Since stochastic ordering implies ordering of expectations for the non-negative random variables, we obtain
\begin{equation}
\label{eq:secondary_integral_expectation_bound}
    \E\left[ \int_0^{X_{\CS}^\star} \mathbbm{1}_{\{ V_{\mathrm{M}} + t > x \}} \diff t \right]
    \le \E\left[ \int_0^{X_{\CS}^\star} \mathbbm{1}_{\{ W_{\CS} + t > x \}} \diff t \right]
    \le \E\left[ \int_0^{X_{\CS}^\star} \mathbbm{1}_{\{ V_{\mathrm{M}} + X_{\CP}^\star + t > x \}} \diff t \right].
\end{equation}
Because $V_{\mathrm{M}}$ and $X_{\CS}^\star$ are independent, the lower bound in \eqref{eq:secondary_integral_expectation_bound} simplifies as in \eqref{eq:primary_conv_raw_1}:
\begin{equation}
    \E\left[ \int_0^{X_{\CS}^\star} \mathbbm{1}_{\{ V_{\mathrm{M}} + t > x \}} \diff t \right]
    = \int_0^\infty \overline{B}_{\CS}(t) \overline{F}_{V_{\mathrm{M}}}(x - t) \diff t.
    \label{eq:secondary_lower_conv}
\end{equation}
For the upper bound, substituting $t$ by $t-X_{\CP}^\star$ inside the integral yields
\begin{equation}
\label{eq:secondary_upper_conditional}
    \int_0^{X_{\CS}^\star} \mathbbm{1}_{\{ V_{\mathrm{M}} + X_{\CP}^\star + t > x \}} \diff t
    \implies \int_{X_{\CP}^\star}^{X_{\CP}^\star + X_{\CS}^\star} \mathbbm{1}_{\{ V_{\mathrm{M}} + t > x \}} \diff t.
\end{equation}
Combining \eqref{eq:secondary_lower_conv} and \eqref{eq:secondary_upper_conditional} with \eqref{eq:secondary_integral_expectation_bound}, we arrive at
\begin{equation}
\label{eq:integral_bounds_explicit}
    \int_0^\infty \overline{B}_{\CS}(t) \overline{F}_{V_{\mathrm{M}}}(x - t) \diff t
    \le \E\left[ \int_0^{X_{\CS}^\star} \mathbbm{1}_{\{ W_{\CS} + t > x \}} \diff t \right]
    \le \E\left[ \int_{X_{\CP}^\star}^{X_{\CP}^\star + X_{\CS}^\star}  \mathbbm{1}_{\{ V_{\mathrm{M}} + t > x \}} \diff t \right].
\end{equation}

\subsection{Workload Stochastic Bounds}
\label{subsec:operators}

In this subsection, we combine the previous bounds with the exact representation of the primary contribution to derive lower and upper bounds for the stationary workload of the queue.

For the lower bound, we substitute the exact primary contribution from \eqref{eq:primary_conv_raw_1} and the lower bound for the secondary contribution from \eqref{eq:integral_bounds_explicit} into the workload identity \eqref{eq:palm_tail}. Thus, for every $x \ge 0$,
\begin{equation}
\label{eq:lower_bound}
    \overline{F}_{V_{\mathrm{M}}}(x)
    \ge \lambda \int_0^\infty \overline{B}_{\CP}(t) \overline{F}_{V_{\mathrm{M}}}(x - t) \diff t
    + \lambda \int_0^\infty \overline{B}_{\CS}(t) \overline{F}_{V_{\mathrm{M}}}(x - t) \diff t.
\end{equation}

For the upper bound, substituting the upper inequality of \eqref{eq:integral_bounds_explicit} into \eqref{eq:palm_tail} gives
\begin{equation}
\label{eq:upper_bound_expectation_form}
    \overline{F}_{V_{\mathrm{M}}}(x)
    \le \lambda \E\left[ \int_0^{X_{\CP}^\star} \mathbbm{1}_{\{ W_{\CP} + t > x \}} \diff t \right]
    + \lambda \E\left[ \int_{X_{\CP}^\star}^{X_{\CP}^\star + X_{\CS}^\star}  \mathbbm{1}_{\{ V_{\mathrm{M}} + t > x \}} \diff t \right].
\end{equation}
Because $W_{\CP} \overset{\mathrm{d}}{=} V_{\mathrm{M}}$ and $W_{\CP}$ is independent of $(X_{\CP}^\star, X_{\CS}^\star)$, we obtain
\begin{equation}
\label{eq:upper_bound}
    \overline{F}_{V_{\mathrm{M}}}(x)
    \le \lambda \E\left[ \int_0^{X_{\CP}^\star + X_{\CS}^\star} \mathbbm{1}_{\{ V_{\mathrm{M}} + t > x \}} \diff t \right] = \lambda \int_0^\infty \overline{B}_{\CP + \CS}(t) \overline{F}_{V_{\mathrm{M}}}(x - t) \diff t.
\end{equation}

The inequalities in \eqref{eq:lower_bound} and \eqref{eq:upper_bound} are not yet directly applicable for performance evaluation, as they contain the unknown tail distribution function $\overline{F}_{V_{\mathrm{M}}}$ on both sides. To transform these relations into computable bounds, we recast these inequalities as functional mappings. In the following subsection, we formalize this approach and the resulting stochastic ordering properties by introducing the corresponding workload operators and their fixed points.

\subsection{Fixed-Point Operators}

Define the space of admissible workload tail functions by
\[
    \overline{\mathcal F}
    =
    \left\{
      \overline{F}:\mathbb{R}\to[0,1]:
      \begin{array}{l}
        \overline{F}(x)=1 \quad \text{for } x<0,\\
        \overline{F} \text{ is non-increasing and right-continuous on } [0,\infty)
      \end{array} 
    \right\}.
\]
For $\overline F\in\overline{\mathcal F}$ and $x\ge0$, we define the lower operator $T_{\mathrm{L}}$ by
\begin{equation}
\label{eq:TLdef}
    T_{\mathrm{L}}(\overline{F})(x)
    = \lambda \int_0^\infty \overline{B}_{\CP}(t) \overline{F}(x - t) \diff t
    + \lambda \int_0^\infty \overline{B}_{\CS}(t) \overline{F}(x - t) \diff t.
\end{equation}
The fixed-point equation $\overline{F} = T_{\mathrm{L}}(\overline F)$ is exactly the level-crossing equation for the stationary workload tail of a two-class $\mathrm{M/G/1}$ queue with independent Poisson arrivals of $\CP$ and $\CS$ customers. We denote its unique fixed-point solution by $\overline{F}_{V_{\mathrm{L}}}$. Similarly, we define the upper operator $T_{\mathrm{U}}$ as
\begin{equation}
\label{eq:TUdef}
    T_{\mathrm{U}}(\overline{F})(x)
    = \lambda \int_0^\infty \overline{B}_{\CP + \CS}(t) \overline{F}(x - t) \diff t.
\end{equation}
The fixed-point equation $\overline{F} = T_{\mathrm{U}}(\overline F)$ is the level-crossing equation for a batch-arrival $\mathrm{M^{[2]}/G/1}$ queue in which each primary--secondary pair arrives as a single job with service requirement $X_{\CS} + X_{\CP}$. We denote its stationary workload tail by $\overline{F}_{V_{\mathrm{U}}}$. With these operators, \eqref{eq:lower_bound} and \eqref{eq:upper_bound} can be compactly written as the operator inequality
\begin{equation}
\label{eq:operator_inequality}
    T_{\mathrm{L}}(\overline{F}_{V_{\mathrm{M}}})(x)
    \le \overline{F}_{V_{\mathrm{M}}}(x)
    \le T_{\mathrm{U}}(\overline{F}_{V_{\mathrm{M}}})(x),
    \qquad x \ge 0.
\end{equation}
\begin{lemma}
\label{lem:operator_properties}
Under the stability condition \eqref{eq:model:stability}, the operators $T_{\mathrm{L}}$ and $T_{\mathrm{U}}$ have the following properties:
\begin{enumerate}
    \item They are monotone: if $\overline F_1 \le \overline F_2$ pointwise, then
    \[
        T_{\mathrm{L}}(\overline F_1)
        \le T_{\mathrm{L}}(\overline F_2),
        \qquad
        T_{\mathrm{U}}(\overline F_1)
        \le T_{\mathrm{U}}(\overline F_2).
    \]

    \item 
    They are continuous under pointwise convergence: if $\overline F_n(x) \to \overline F(x)$ for every $x$, then, for every $x \ge 0$,
    \[
        T_{\mathrm{L}}(\overline F_n)(x) \to T_{\mathrm{L}}(\overline F)(x),
        \qquad
        T_{\mathrm{U}}(\overline F_n)(x) \to T_{\mathrm{U}}(\overline F)(x).
    \]

    \item The fixed-point equations $\overline F = T_{\mathrm{L}}(\overline F)$ and $\overline F = T_{\mathrm{U}}(\overline F)$ have unique solutions $\overline{F}_{V_{\mathrm{L}}}$ and $\overline{F}_{V_{\mathrm{U}}}$ in $\overline{\mathcal F}$, respectively.

\end{enumerate}
\end{lemma}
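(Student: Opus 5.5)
Both operators have the common form $T(\overline F)(x)=\lambda\int_0^\infty G(t)\,\overline F(x-t)\diff t$, with a nonnegative kernel $G$: $G=\overline B_{\CP}+\overline B_{\CS}$ for $T_{\mathrm L}$ and $G=\overline B_{\CP+\CS}$ for $T_{\mathrm U}$. In both cases $\lambda\int_0^\infty G(t)\diff t=\lambda(\E[X_{\CP}]+\E[X_{\CS}])=\rho<1$. I will prove the three properties once for this generic $T$.

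\emph{Monotonicity.} This is immediate. The kernel $G\ge 0$, so $\overline F_1\le\overline F_2$ pointwise implies $G(t)\overline F_1(x-t)\le G(t)\overline F_2(x-t)$ for every $t$. Integrating preserves the inequality.

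\emph{Continuity.} Use dominated convergence. Every element of $\overline{\mathcal F}$ takes values in $[0,1]$, so $|G(t)\overline F_n(x-t)|\le G(t)$. The bound $G$ is integrable because $\int G=\rho/\lambda<\infty$ under the finite first moments. Pointwise convergence $\overline F_n(x-t)\to\overline F(x-t)$ for every $t$ then gives $T(\overline F_n)(x)\to T(\overline F)(x)$.

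\emph{Uniqueness of the fixed point.} This is the main step. Let $\overline F,\overline H\in\overline{\mathcal F}$ both be fixed points and set $D(x)=\overline F(x)-\overline H(x)$. For $x<0$ both functions equal $1$, so $D(x)=0$ there and $|D|\le 1$ everywhere. Linearity gives
\[
|D(x)|\le\lambda\int_0^\infty G(t)\,|D(x-t)|\diff t\le \rho\,\sup_y|D(y)|.
\]
Taking the supremum over $x$ yields $\|D\|_\infty\le\rho\|D\|_\infty$. Since $\rho<1$ and $\|D\|_\infty\le 1$ is finite, $D\equiv 0$. So $T$ is a strict contraction in the sup norm, with constant $\rho$, on the set of bounded functions equal to $1$ on $(-\infty,0)$.

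\emph{Existence.} I would not rely on Banach's theorem here, because $\overline{\mathcal F}$ with the sup norm is closed but $T$ need not preserve the normalization near $0$: at $x=0$, $T(\overline F)(0)=\rho\ne 1$ is allowed, since tails jump at the origin. Instead I exhibit the fixed point directly. It is the stationary workload tail of the corresponding classical queue: the two-class M/G/1 queue for $T_{\mathrm L}$, and the M/G/1 queue with service $X_{\CP}+X_{\CS}$ for $T_{\mathrm U}$. Under $\rho<1$ its tail is given by the Pollaczek--Khinchine formula, and it is in $\overline{\mathcal F}$. The level-crossing argument of Subsection~\ref{subsec:exact_workload_identity}, now applied with PASTA for every arriving class, shows that this tail satisfies $\overline F=T(\overline F)$ for $x\ge 0$.

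As a cross-check, or as an alternative existence proof, iterate $\overline F_{n+1}=T(\overline F_n)$ starting from $\overline F_0\equiv 0$ on $[0,\infty)$ (and $1$ on negatives). Monotonicity makes the iterates nondecreasing. They are bounded by the PK tail, by induction, since that tail is a fixed point. The limit is a fixed point by the continuity property. Uniqueness then identifies it with the PK tail.

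The step I expect to need the most care is existence within $\overline{\mathcal F}$, specifically right-continuity and monotonicity of the solution. I would settle it by invoking the explicit PK solution rather than abstract arguments. The contraction estimate handles uniqueness cleanly.
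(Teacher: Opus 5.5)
Your proof is correct. Parts 1 and 2 match the paper's argument: nonnegative kernels for monotonicity, and dominated convergence with the integrable dominating function $G$ for continuity. Part 3 takes a genuinely different route.

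The paper obtains uniqueness by recognizing $\overline F=T(\overline F)$ as the level-crossing equation of a stable two-class $\mathrm{M/G/1}$ (resp.\ $\mathrm{M^{[2]}/G/1}$) queue and appealing to uniqueness of that queue's stationary distribution. That step implicitly needs every solution in $\overline{\mathcal F}$ to be a stationary workload tail, and the paper does not argue this. Your estimate $\|T(\overline F)-T(\overline H)\|_\infty\le\rho\,\|\overline F-\overline H\|_\infty$ closes that gap directly, using only $\rho<1$. It gives uniqueness even among all bounded functions equal to $1$ on $(-\infty,0)$, with no monotonicity or right-continuity required. As a bonus, it shows that the iterations the paper runs from $\overline F_{V_{\mathrm M}}$ converge to the fixed points geometrically, at rate $\rho^n$ in sup norm. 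The queueing input is then needed only to identify the fixed points with $\overline F_{V_{\mathrm L}}$ and $\overline F_{V_{\mathrm U}}$, exactly as in the paper.

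One correction: your reason for avoiding Banach's theorem does not hold. $\overline{\mathcal F}$ requires $\overline F=1$ only on $(-\infty,0)$ and right-continuity only on $[0,\infty)$, so the jump $T(\overline F)(0)=\rho$ is permitted. The paper's preservation step (values in $[0,\rho]$ on $[0,\infty)$, non-increasing, right-continuous by dominated convergence) shows that $T$ maps $\overline{\mathcal F}$ into itself. Since $\overline{\mathcal F}$ is closed under uniform limits, your contraction already yields existence in $\overline{\mathcal F}$. Neither the Pollaczek--Khinchine detour nor the monotone iteration is needed for existence.
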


\begin{proof}
See Appendix~\ref{app:operator_properties_proof}.
\end{proof}
We now use the operator inequality~\eqref{eq:operator_inequality} to pass from inequalities to stochastic ordering.

\paragraph{Lower Bound Convergence}
We initialize the sequence with $\overline F_0 = \overline{F}_{V_{\mathrm{M}}}$ and iterate $\overline F_{n+1} = T_{\mathrm{L}}(\overline F_n)$ for $n \ge 0$.
By \eqref{eq:operator_inequality}, $\overline F_1 \le \overline F_0$. Applying the monotonicity property then yields
\[
    \overline F_0 \ge \overline F_1 \ge \overline F_2 \ge \cdots.
\]
The sequence is bounded below by zero and consequently converges pointwise to a limit function $\overline F_\infty$. By the continuity of $T_{\mathrm{L}}$, it follows that $\overline F_\infty = T_{\mathrm{L}}(\overline F_\infty)$.
The uniqueness of the lower fixed point implies $\overline F_\infty = \overline{F}_{V_{\mathrm{L}}}$, establishing that
\begin{equation}
\label{eq:lower_stochastic_order}
    V_{\mathrm{L}} \le_{\mathrm{st}} V_{\mathrm{M}}.
\end{equation}

\paragraph{Upper Bound Convergence}
Similarly, we set $\overline G_0 = \overline{F}_{V_{\mathrm{M}}}$ and iterate $\overline G_{n+1} = T_{\mathrm{U}}(\overline G_n)$ for $n \ge 0$.
Here \eqref{eq:operator_inequality} gives $\overline G_0 \le \overline G_1$, and monotonicity implies
\[
    \overline G_0 \le \overline G_1 \le \overline G_2 \le \cdots.
\]
The sequence is bounded above by one and converges pointwise to a fixed point of $T_{\mathrm{U}}$. Uniqueness implies this limit is $\overline{F}_{V_{\mathrm{U}}}$, establishing
\begin{equation}
\label{eq:upper_stochastic_order}
 V_{\mathrm{M}} \le_{\mathrm{st}} V_{\mathrm{U}}.
\end{equation}

Combining \eqref{eq:lower_stochastic_order} and \eqref{eq:upper_stochastic_order} establishes the workload sandwich
\begin{equation}
\label{eq:workload_bounds}
    V_{\mathrm{L}} \le_{\mathrm{st}} V_{\mathrm{M}} \le_{\mathrm{st}} V_{\mathrm{U}}.
\end{equation}
Thus, the true steady-state workload of the $\mathrm{M^T/G/1}$ queue lies between the steady-state workloads of two classical queues.

\subsection{Extreme Triggering Delays}
\label{sec:asymptotics}

The stochastic sandwich established previously, $V_{\mathrm{L}} \le_{\mathrm{st}} V_{\mathrm{M}} \le_{\mathrm{st}} V_{\mathrm{U}}$, provides bounds for the stationary workload distribution of the $\mathrm{M^T/G/1}$ queue under general triggering-delay and service-time distributions. In this subsection, we show that both bounds are asymptotically tight in the limit in the two extreme regimes of triggering delays.

Let $\{Y^{(k)}\}_{k \ge 1}$ be a sequence of non-negative random variables representing the triggering delays, indexed by $k = 1, 2, \dots$. For each fixed $k$, let $V_{\mathrm{M}}^{(k)}$ denote the stationary workload of the corresponding $\mathrm{M^T/G/1}$ queue operating under triggering delay $Y^{(k)}$.

\begin{theorem}
\label{thm:asymp_large_delay}
If the triggering delays diverge in probability to infinity, i.e.,
\[
    Y^{(k)} \xrightarrow{\mathrm{P}} \infty
    \qquad\text{as } k \to \infty,
\]
then the stationary workload converges in distribution to the lower-bound workload:
\[
    V_{\mathrm{M}}^{(k)} \xrightarrow{\mathrm{d}} V_{\mathrm{L}}, \qquad \text{as } k \to \infty.
\]
\end{theorem}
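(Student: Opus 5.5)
The plan is to make the coupling of Section~\ref{sec:ph} quantitative. The goal is to show that the secondary contribution in the exact identity \eqref{eq:palm_tail} approaches its lower bound \eqref{eq:secondary_lower_conv} uniformly in $x$. We then pass to the limit in the resulting perturbed fixed-point equation for $T_{\mathrm{L}}$.

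\textbf{Step 1: a perturbed fixed-point identity.} Write $\overline F_k\coloneqq\overline F_{V^{(k)}_{\mathrm M}}$. By Theorem~\ref{thm:ph:conditional_dist} we may realize $W^{(k)}_{\CS}=V^{\mathrm{RP}}_{\mathrm M}(Y^{(k)-})+\Delta_k$ with $0\le\Delta_k\le X^\star_{\CP}$. For fixed $x$ and $X^\star_{\CS}$, the map $w\mapsto\int_0^{X^\star_{\CS}}\mathbbm{1}_{\{w+t>x\}}\diff t$ is non-decreasing and $1$-Lipschitz, and it is bounded by $X^\star_{\CS}$. Inserting this into \eqref{eq:palm_tail} together with \eqref{eq:primary_conv_raw_1} and \eqref{eq:secondary_lower_conv} gives
\[
\overline F_k(x)=T_{\mathrm L}(\overline F_k)(x)+\varepsilon_k(x),\qquad 0\le\varepsilon_k(x)\le\lambda\,\E\bigl[\min\{\Delta_k,X^\star_{\CS}\}\bigr]\le \lambda\,\E\bigl[X^\star_{\CP}\mathbbm{1}_{\{\Delta_k>0\}}\bigr].
\]
Since $X^\star_{\CP}$ is integrable and does not depend on $k$, uniform integrability yields $\sup_x\varepsilon_k(x)\to0$ as soon as $\Prob(\Delta_k>0)\to0$.

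\textbf{Step 2 (main obstacle): $\Prob(\Delta_k>0)\to0$.} By \eqref{eq:ph:workload_relation}, $\Delta_k>0$ forces $I(Y^{(k)-})>0$, i.e.\ the TP unreflected path $V+X^\star_{\CP}+\bar A(s)-s$ stays positive on $(0,Y^{(k)})$. The difficulty is that the input $\bar A$ itself depends on $k$ through the pending secondaries, so the estimate must be uniform in $k$. For fixed $m,M>0$ we bound
\[
\Prob(\Delta_k>0)\le \Prob(Y^{(k)}\le M)+\Prob(V^{(k)}_{\mathrm M}+X^\star_{\CP}>m)+\Prob\bigl(\bar A(M)>M-m\bigr).
\]
\begin{itemize}[nosep]
\item The first term vanishes as $k\to\infty$ because $Y^{(k)}\xrightarrow{\mathrm P}\infty$.
\item The second term is at most $\Prob(V_{\mathrm U}+X_{\CP}>m)$ by \eqref{eq:workload_bounds}, and $V_{\mathrm U}$ does not depend on $k$.
\item For the third term, $\bar A(M)$ is the sum of the work of primaries arriving in $(0,M)$ and of secondaries arriving in $(0,M)$. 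By stationarity, each of these is a marginally compound-Poisson amount of work with rate $\lambda M$, regardless of $Y^{(k)}$. Hence $\E\bar A(M)=\rho M$, and by $\var(U+V)\le2\var U+2\var V$ we get $\var\bar A(M)\le2\lambda M(\E X_{\CP}^2+\E X_{\CS}^2)$, uniformly in $k$. Chebyshev then gives a bound of order $M/((1-\rho)M-m)^2$.
\end{itemize}
Choosing $m$ large first, then $M$ large, and finally $k$ large gives the claim. The delicate point here is checking that, under the Palm/coupling construction, the secondary arrivals in $(0,M)$ seen from the tagged epoch still have at most Poisson-type variance. If needed, I would handle this by conditioning on the primary process and using that displaced Poisson points form a Poisson process.

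\textbf{Step 3: passage to the limit.} By \eqref{eq:workload_bounds}, every $V^{(k)}_{\mathrm M}$ lies between $V_{\mathrm L}$ and $V_{\mathrm U}$ in $\le_{\mathrm{st}}$, so the family is tight. Take any subsequence. By Helly's theorem it has a further subsequence with $\overline F_{k_j}\to\overline F$ at continuity points, where $\overline F\in\overline{\mathcal F}$ is proper and taken right-continuous. Since convergence holds Lebesgue-a.e., dominated convergence (the argument behind Lemma~\ref{lem:operator_properties}, part~2) gives $T_{\mathrm L}(\overline F_{k_j})(x)\to T_{\mathrm L}(\overline F)(x)$ for every $x$. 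Combining this with $\sup_x\varepsilon_{k_j}\to0$ yields $\overline F=T_{\mathrm L}(\overline F)$ at continuity points. Both sides are right-continuous (the right side is even continuous for $x\ge0$), so the equality holds everywhere on $[0,\infty)$. Uniqueness in Lemma~\ref{lem:operator_properties}, part~3, gives $\overline F=\overline F_{V_{\mathrm L}}$. Every subsequence therefore has a further subsequence converging to the same limit, which proves $V^{(k)}_{\mathrm M}\xrightarrow{\mathrm d}V_{\mathrm L}$.
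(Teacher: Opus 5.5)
Your proposal is correct and follows the paper's proof in all essentials. Both arguments use the coupling representation $W_{\CS}^{(k)} = V^{\mathrm{RP}}_{\mathrm M} + \Delta^{(k)}$, show $\Prob(\Delta^{(k)}>0)\to 0$ by a clearing argument that combines the $V_{\mathrm U}$ upper bound with Chebyshev and a variance bound linear in time, and identify every subsequential limit as the unique fixed point of $T_{\mathrm L}$. Your variations are sound and, if anything, make the limit passage slightly more careful than the paper's: the perturbed identity $\overline F_k = T_{\mathrm L}(\overline F_k)+\varepsilon_k$ with $\sup_x\varepsilon_k(x)\to0$ replaces the paper's Slutsky step, the cruder bound $\var(U+V)\le 2\var(U)+2\var(V)$ replaces the exact computation in Appendix~\ref{app:mgf}, and you handle continuity points explicitly.
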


\begin{proof}
See Appendix~\ref{app:large_delay_proof}.
\end{proof}

\begin{theorem}
\label{thm:asymp_zero_delay}
If the triggering delays converge in probability to zero, i.e.,
\[
    Y^{(k)} \xrightarrow{\mathrm{P}} 0 \qquad\text{as } k \to \infty,
\]
then the stationary workload converges in distribution to the upper-bound workload:
\[
    V_{\mathrm{M}}^{(k)} \xrightarrow{\mathrm{d}} V_{\mathrm{U}}, \qquad \text{as } k \to \infty.
\]
\end{theorem}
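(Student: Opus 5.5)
The plan is to use the exact level-crossing identity \eqref{eq:palm_tail} together with the coupling of Section~\ref{sec:ph}. The aim is to show that the gap between $\overline F_{V_{\mathrm M}^{(k)}}$ and $T_{\mathrm U}(\overline F_{V_{\mathrm M}^{(k)}})$ vanishes as $k\to\infty$. After that, a compactness and fixed-point uniqueness argument (Lemma~\ref{lem:operator_properties}) identifies the limit.

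\textbf{Step 1: tightness.} By \eqref{eq:workload_bounds}, $V_{\mathrm L}\le_{\mathrm{st}} V_{\mathrm M}^{(k)}\le_{\mathrm{st}} V_{\mathrm U}$ for every $k$. The upper bound does not depend on $k$, so the family $\{V_{\mathrm M}^{(k)}\}$ is tight. By Helly's selection theorem, every subsequence has a further subsequence along which $\overline F_{V_{\mathrm M}^{(k)}}\to\overline F_\infty$ at continuity points. I will work along such a subsequence. The goal is to show that $\overline F_\infty=T_{\mathrm U}(\overline F_\infty)$; uniqueness of the fixed point then gives $\overline F_\infty=\overline F_{V_{\mathrm U}}$, and therefore convergence of the whole sequence. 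Two points need care here. First, the limit must not lose mass, which is guaranteed by domination by $V_{\mathrm U}$. Second, convergence holds only at continuity points, but the convolution integrals in $T_{\mathrm U}$ are insensitive to a countable set of discontinuities, so dominated convergence gives $T_{\mathrm U}(\overline F_{V_{\mathrm M}^{(k)}})(x)\to T_{\mathrm U}(\overline F_\infty)(x)$.

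\textbf{Step 2: the secondary contribution converges to the batch contribution.} Write $I_k$ for the infimum $I(Y^-)$ and $\Delta_k$ for the erosion term in Theorem~\ref{thm:ph:conditional_dist}, both computed with delay $Y^{(k)}$. The same computation that produced \eqref{eq:upper_bound} gives
\[
0\le T_{\mathrm U}(\overline F_{V_{\mathrm M}^{(k)}})(x)-\overline F_{V_{\mathrm M}^{(k)}}(x)\le \lambda\,\E\Bigl[\int_0^{X_{\CS}^\star}\bigl(\mathbbm{1}_{\{V^{\mathrm{RP}}+X_{\CP}^\star+t>x\}}-\mathbbm{1}_{\{V^{\mathrm{RP}}+\Delta_k+t>x\}}\bigr)\diff t\Bigr].
\]
The integrand is at most $\min\{X_{\CS}^\star,\,X_{\CP}^\star-\Delta_k\}$, because an interval of length $X_{\CS}^\star$ is shifted by $X_{\CP}^\star-\Delta_k\ge 0$. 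Hence the whole gap is bounded by $\lambda\E[\min\{X_{\CS}^\star,X_{\CP}^\star-\Delta_k\}]$, uniformly in $x$.

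It therefore suffices to show $X_{\CP}^\star-\Delta_k\to 0$ in probability; dominated convergence, with dominating function $X_{\CS}^\star$, then finishes the step. Since $V_{\mathrm M}^{(k)}\ge 0$ and $\bar A\ge 0$, we have
\[
I_k\ge X_{\CP}^\star-Y^{(k)},
\qquad\text{hence}\qquad
X_{\CP}^\star-\Delta_k\le \min\{X_{\CP}^\star,\,(Y^{(k)})^+\}\le Y^{(k)}.
\]
Because $Y^{(k)}\xrightarrow{\mathrm P}0$, this gives the required convergence in probability.

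\textbf{Step 3: conclusion.} Step 2 bounds the gap uniformly in $x$ by a quantity that tends to zero. Passing to the limit along the subsequence of Step 1 gives $\overline F_\infty=T_{\mathrm U}(\overline F_\infty)$ at continuity points, and right-continuity extends this to all $x\ge 0$. Lemma~\ref{lem:operator_properties}(3) then yields $\overline F_\infty=\overline F_{V_{\mathrm U}}$, hence $V_{\mathrm M}^{(k)}\xrightarrow{\mathrm d}V_{\mathrm U}$.

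The main obstacle I expect is a technical one in Step 2. The identity \eqref{eq:palm_tail} and the representation of Theorem~\ref{thm:ph:conditional_dist} are needed for each fixed $k$ with the same joint law of $(V^{\mathrm{RP}},X^\star_{\CP},X^\star_{\CS})$. Only the $I_k$ term depends on $k$, and the bound $X_{\CP}^\star-\Delta_k\le Y^{(k)}$ holds pathwise, so this works out; the only delicate point is confirming that the constant bound on the gap is uniform in $k$.
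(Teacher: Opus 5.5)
Your proof is correct. It follows the paper's overall architecture: tightness from $V_{\mathrm M}^{(k)}\le_{\mathrm{st}}V_{\mathrm U}$, subsequential limits, and identification of the limit as the unique fixed point of $T_{\mathrm U}$. Both key steps, however, are carried out differently.

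For $\Delta^{(k)}\to X_{\CP}^\star$, the paper splits on the event $\{S_{\CP}^{(k)}>Y^{(k)}\}$ and uses $X_{\CP}^\star>0$ a.s., which the model does not assume. Your observation $I_k\ge X_{\CP}^\star-Y^{(k)}$ gives $0\le X_{\CP}^\star-\Delta_k\le\min\{X_{\CP}^\star,Y^{(k)}\}$ on every path, so neither the event-splitting nor that assumption is needed.

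For passing to the limit, the paper applies Slutsky's theorem to $(V_{\mathrm M}^{(k)},\Delta^{(k)},X_{\CP}^\star,X_{\CS}^\star)$ and then dominated convergence inside \eqref{eq:palm_tail}. It leaves implicit that weak convergence controls tails only at continuity points. You instead prove the approximate fixed-point estimate
$0\le T_{\mathrm U}(\overline F_{V_{\mathrm M}^{(k)}})(x)-\overline F_{V_{\mathrm M}^{(k)}}(x)\le\lambda\E[\min\{X_{\CS}^\star,Y^{(k)}\}]$,
uniformly in $x$, and you handle the continuity-point issue explicitly.

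Your version is more elementary and quantitative, and it actually makes the compactness step unnecessary. For tails equal to $1$ on $(-\infty,0)$,
$\sup_{x\ge0}|T_{\mathrm U}(f)(x)-T_{\mathrm U}(g)(x)|\le\rho\sup_{x\ge0}|f(x)-g(x)|$.
Your estimate therefore directly yields
$\sup_{x}|\overline F_{V_{\mathrm M}^{(k)}}(x)-\overline F_{V_{\mathrm U}}(x)|\le\lambda\E[\min\{X_{\CS}^\star,Y^{(k)}\}]/(1-\rho)$,
which is a uniform convergence rate. The paper's Slutsky formulation, in turn, is written to parallel its proof of Theorem~\ref{thm:asymp_large_delay}.

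The ``delicate point'' in your last paragraph is not really an obstacle. What you need is that the bound is uniform in $x$ and vanishes as $k\to\infty$, and it does.
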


\begin{proof}
See Appendix~\ref{app:small_delay_proof}.
\end{proof}

Theorems~\ref{thm:asymp_large_delay} and~\ref{thm:asymp_zero_delay} demonstrate that $V_{\mathrm{L}}$ and $V_{\mathrm{U}}$ are not merely bounding distributions; rather, they arise as the limiting stationary workload distributions in the two extreme regimes of the triggering delay. When triggering delays diverge to infinity in probability, the stationary workload distribution converges to that of the independent two-class $\mathrm{M/G/1}$ queue. Conversely, when triggering delays converge to zero in probability, primary and secondary customers tend to arrive as a batch, and the stationary workload distribution converges to that of the $\mathrm{M^{[2]}/G/1}$ queue.  It is important to note that these limit statements do not imply that $V_{\mathrm{M}}^{(k)}$ is generally stochastically monotone in $Y^{(k)}$.

\subsection{System-Time Bounds}
\label{subsec:system_time_bounds}

We finally translate the workload distribution sandwich into bounds on class-specific system times. For any tagged customer, the system time equals the workload observed immediately before arrival plus its own service requirement.

\paragraph{Primary customers}
For primary customers, PASTA gives $W_{\CP} \overset{\mathrm{d}}{=} V_{\mathrm{M}}$, and the tagged service requirement is independent of the pre-arrival workload. Therefore,
\[
    S_{\CP} \overset{\mathrm{d}}{=} V_{\mathrm{M}} + X_{\CP}^\star.
\]
Combining this representation with \eqref{eq:workload_bounds}, and noting that $X_{\CP}^\star$ is independent of the comparison workloads, yields
\begin{equation}
\label{eq:primary_time_bound}
    V_{\mathrm{L}} + X_{\CP}^\star
    \le_{\mathrm{st}} S_{\CP}
    \le_{\mathrm{st}} V_{\mathrm{U}} + X_{\CP}^\star.
\end{equation}
The lower bound corresponds to the system time of a primary customer in the two-class $\mathrm{M/G/1}$ queue, whereas the upper bound corresponds to the system time of a primary customer in the $\mathrm{M^{[2]}/G/1}$ queue.

\paragraph{Secondary customers}
For secondary customers, the arrival epoch is not an independent observation epoch, so we again use the coupling result from Section~\ref{sec:ph}. Under the coupling of Section~\ref{sec:ph}, we have
\[
  V_{\mathrm{M}}^{\mathrm{RP}}(Y^-)
   \le V_{\mathrm{M}}^{\mathrm{TP}}(Y^-)
   \le V_{\mathrm{M}}^{\mathrm{RP}}(Y^-) + X_{\CP}^{\star}
  \qquad \text{a.s.}
\]
Adding $X_{\CS}^{\star}$ to these pathwise inequalities gives
\[
  V_{\mathrm{M}}^{\mathrm{RP}}(Y^-) + X_{\CS}^{\star}
   \le V_{\mathrm{M}}^{\mathrm{TP}}(Y^-) + X_{\CS}^{\star}
   \le V_{\mathrm{M}}^{\mathrm{RP}}(Y^-) + X_{\CP}^{\star} + X_{\CS}^{\star}
  \qquad \text{a.s.}
\]
Since $V_{\mathrm{M}}^{\mathrm{TP}}(Y^-) + X_{\CS}^{\star} \stackrel{\mathrm{d}}{=}  S_{\CS}$, and $V_{\mathrm{M}}^{\mathrm{RP}}(Y^-) \stackrel{\mathrm{d}}{=} V_{\mathrm{M}}$ is independent of the tagged service pair $\bigl(X_{\CP}^{\star}, X_{\CS}^{\star}\bigr)$, it follows that
\[
  V_{\mathrm{M}} + X_{\CS}^{\star}
   \le_{\mathrm{st}}  S_{\CS}
   \le_{\mathrm{st}}  V_{\mathrm{M}} + X_{\CP}^{\star} + X_{\CS}^{\star}.
\]
Finally, applying the workload sandwich~\eqref{eq:workload_bounds}, with the comparison workloads taken independently of the tagged service pair, yields
\begin{equation}
  \label{eq:secondary_time_bound}
  V_{\mathrm{L}} + X_{\CS}^{\star}
   \le_{\mathrm{st}}  S_{\CS}
   \le_{\mathrm{st}}  V_{\mathrm{U}} + X_{\CP}^{\star} + X_{\CS}^{\star}.
\end{equation}
The lower bound in~\eqref{eq:secondary_time_bound} is the system-time distribution of a secondary customer in the independent two-class $\mathrm{M/G/1}$ queue. The upper bound is the system-time distribution of a secondary customer in the $\mathrm{M^{[2]}/G/1}$ queue when the primary customer is served before the corresponding secondary customer within each batch.

We summarize these results in the following theorem.
\begin{theorem}\label{thm:system_time_bounds}
Consider the $\mathrm{M^T/G/1}$ queue with generally distributed triggering delays. The stationary system times of primary and secondary customers, $S_{\CP}$ and $S_{\CS}$, satisfy
\[
    S_{\CP,\mathrm{L}} \le_{\mathrm{st}} S_{\CP} \le_{\mathrm{st}} S_{\CP,\mathrm{U}},
    \qquad
    S_{\CS,\mathrm{L}} \le_{\mathrm{st}} S_{\CS} \le_{\mathrm{st}} S_{\CS,\mathrm{U}},
\]
where $S_{\CP,\mathrm{L}}$ and $S_{\CS,\mathrm{L}}$ denote the respective system times in the two-class $\mathrm{M/G/1}$ queue, and $S_{\CP,\mathrm{U}}$ and $S_{\CS,\mathrm{U}}$ denote the respective system times in the batch-arrival $\mathrm{M^{[2]}/G/1}$ queue.
Moreover, these bounds are asymptotically tight in the limiting regimes established in Theorems~\ref{thm:asymp_large_delay} and~\ref{thm:asymp_zero_delay}. In particular,
\[
    S_{\CP}^{(k)} \xrightarrow{\mathrm{d}} S_{\CP,\mathrm{L}},
    \qquad
    S_{\CS}^{(k)} \xrightarrow{\mathrm{d}} S_{\CS,\mathrm{L}},
    \qquad
    \text{as } Y^{(k)} \xrightarrow{\mathrm{P}} \infty 
    \quad
    (k\to\infty),
\]
whereas
\[
    S_{\CP}^{(k)} \xrightarrow{\mathrm{d}} S_{\CP,\mathrm{U}},
    \qquad
    S_{\CS}^{(k)} \xrightarrow{\mathrm{d}} S_{\CS,\mathrm{U}},
    \qquad
    \text{as } Y^{(k)} \xrightarrow{\mathrm{P}} 0
    \quad
    (k\to\infty).
\]
\end{theorem}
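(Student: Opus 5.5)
The proof has two parts: the stochastic bounds, which are essentially already assembled above, and the asymptotic tightness, which I will derive from Theorems~\ref{thm:asymp_large_delay} and~\ref{thm:asymp_zero_delay} together with the coupling of Section~\ref{sec:ph}.

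\textbf{Stochastic bounds.} For the ordering statements, I will identify the bounding variables with the explicit expressions obtained above.

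In the two-class $\mathrm{M/G/1}$ queue, both classes arrive as causally independent Poisson streams. By PASTA, $S_{\CP,\mathrm{L}} \overset{\mathrm{d}}{=} V_{\mathrm{L}}+X_{\CP}^\star$ and $S_{\CS,\mathrm{L}} \overset{\mathrm{d}}{=} V_{\mathrm{L}}+X_{\CS}^\star$, with $V_{\mathrm{L}}$ independent of the tagged pair.

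In the $\mathrm{M^{[2]}/G/1}$ queue, batches arrive as a Poisson stream and the primary is served first within a batch. Hence $S_{\CP,\mathrm{U}} \overset{\mathrm{d}}{=} V_{\mathrm{U}}+X_{\CP}^\star$ and $S_{\CS,\mathrm{U}} \overset{\mathrm{d}}{=} V_{\mathrm{U}}+X_{\CP}^\star+X_{\CS}^\star$.

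Then \eqref{eq:primary_time_bound} and \eqref{eq:secondary_time_bound} give the four inequalities directly. The one point to spell out is how $V_{\mathrm{L}}\le_{\mathrm{st}}V_{\mathrm{M}}$ transfers to sums with an independent summand. I will use the standard fact that $\le_{\mathrm{st}}$ is closed under convolution. Concretely, by Strassen's theorem one can couple $V_{\mathrm{L}}\le V_{\mathrm{M}}\le V_{\mathrm{U}}$ almost surely on a space independent of $(X_{\CP}^\star,X_{\CS}^\star)$, and then add the service terms.

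\textbf{Asymptotic tightness, primary customers.} Since $S_{\CP}^{(k)}\overset{\mathrm{d}}{=}V_{\mathrm{M}}^{(k)}+X_{\CP}^\star$ with independent summands, characteristic functions factorize. Theorems~\ref{thm:asymp_large_delay} and~\ref{thm:asymp_zero_delay} combined with L\'evy's continuity theorem then give $S_{\CP}^{(k)}\to V_{\mathrm{L}}+X_{\CP}^\star$ and $S_{\CP}^{(k)}\to V_{\mathrm{U}}+X_{\CP}^\star$ in the two regimes.

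\textbf{Asymptotic tightness, secondary customers.} This is the main obstacle, because $W_{\CS}^{(k)}$ and $X_{\CS}^\star$ are dependent. I will use the representation of Theorem~\ref{thm:ph:conditional_dist}:
\[
S_{\CS}^{(k)}\overset{\mathrm{d}}{=}V^{\mathrm{RP},(k)}_{\mathrm M}(Y^{(k)-})+\Delta^{(k)}+X_{\CS}^\star,
\qquad
V^{\mathrm{RP},(k)}_{\mathrm M}(Y^{(k)-})\overset{\mathrm{d}}{=}V_{\mathrm M}^{(k)}\ \text{independent of the tagged pair}.
\]
The plan is to show that $\Delta^{(k)}\to0$ in probability when $Y^{(k)}\xrightarrow{\mathrm P}\infty$, and that $\Delta^{(k)}-X_{\CP}^\star\to0$ in probability when $Y^{(k)}\xrightarrow{\mathrm P}0$. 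Slutsky's lemma then finishes both cases.

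For the small-delay case, the unreflected TP path satisfies
\[
I(Y^-)\ \ge\ V_{\mathrm M}+X_{\CP}^\star-Y.
\]
Since $Y^{(k)}\to0$ in probability, $\Prob\bigl(I(Y^-)<X_{\CP}^\star-\varepsilon\bigr)\le\Prob(Y^{(k)}>\varepsilon)\to0$. By \eqref{eq:delta_identity}, $\Delta^{(k)}=\min\{X_{\CP}^\star,\max(0,I)\}$, so $\Delta^{(k)}\ge X_{\CP}^\star-\varepsilon$ with probability tending to one, as required.

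For the large-delay case, I need $\Prob(I(Y^{(k)-})>0)\to0$, i.e.\ the TP queue empties during $(0,Y^{(k)})$ with probability tending to one. I will bound the common input $\bar A$ stochastically by the input of the $\mathrm{M^{[2]}/G/1}$ system plus the finite initial work $V_{\mathrm M}+X_{\CP}^\star$. In that system, the time to drain an initial workload $u$ is a busy period started by $u$, which is almost surely finite because $\rho<1$; the pending secondaries initially in $\boldsymbol V_{\mathrm A}$ are covered by charging each batch's full work at its primary epoch. Tightness of $V_{\mathrm M}^{(k)}$, which follows from $V_{\mathrm M}^{(k)}\le_{\mathrm{st}}V_{\mathrm U}$, makes this emptying time tight uniformly in $k$. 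Together with $Y^{(k)}\to\infty$ in probability, this yields $\Delta^{(k)}\to0$ in probability.

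I expect this domination-of-the-input step to be where the argument must be made most carefully. I would model it on the argument used in the appendix proof of Theorem~\ref{thm:asymp_large_delay}.
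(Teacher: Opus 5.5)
Your treatment of the ordering statements and of the primary-class limits follows the paper. Your small-delay argument is correct and slightly cleaner than the paper's. Because $\bar A\ge 0$ and $s<Y$, you get $I(Y^-)\ge X_{\CP}^\star-Y$, hence $X_{\CP}^\star-Y\le\Delta\le X_{\CP}^\star$ pathwise, without using the event $\{S_{\CP}^{(k)}>Y^{(k)}\}$ or $X_{\CP}^\star>0$.

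The gap is in the large-delay step. The domination you propose, bounding $\bar A$ by the $\mathrm{M^{[2]}/G/1}$ input with the common initial work $V_{\mathrm M}+X_{\CP}^\star$, is false. It fails exactly on the secondaries already pending in $\boldsymbol V_{\mathrm A}$ at time $0$:
\begin{itemize}
\item Their primaries arrived before the tagged epoch, so charging batch work at primary epochs cannot place their work inside $(0,s]$.
\item Moving that work into the initial condition does not help: it has mean $\lambda\E[Y^{(k)}]\E[X_{\CS}]$, which diverges because $Y^{(k)}\xrightarrow{\mathrm P}\infty$ forces $\E[Y^{(k)}]\to\infty$. Uniform tightness is lost.
\item Keeping them as input at their true epochs, on top of the charged batches, gives input rate $\rho+\lambda\E[X_{\CS}]\overline F_{Y^{(k)}}(u)$ at time $u$. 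This can exceed one (e.g.\ $\rho=0.8$ with $\E[X_{\CP}]=\E[X_{\CS}]$), so the dominating system need not drain.
\end{itemize}
A concrete check: take $Y^{(k)}\equiv k$ and $s<k$. Then $\bar A(s)$ is exactly the two-class $\mathrm{M/G/1}$ input, namely primaries in $(0,s]$ plus secondaries of the disjoint primaries in $(-k,s-k]$. It has the same mean $\rho s$ as the batch input but strictly smaller variance whenever $\E[X_{\CP}X_{\CS}]>0$. Equal means rule out any nontrivial $\le_{\mathrm{st}}$ comparison, in either direction.

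What is missing is the recognition that no process-level domination is needed. A law-of-large-numbers estimate suffices, provided the pending secondaries enter at their actual epochs, so that $\E[\bar A^{(k)}(s)]=\rho s$ for every $k$. This is the paper's route in Appendix~\ref{app:large_delay_proof}. With $\delta=(1-\rho)/2$, a union bound gives $\Prob(T_0^{(k)}>s)\le\Prob(V_{\mathrm U}+X_{\CP}^\star>\delta s)+C/(\delta^2 s)$. The first term uses $V_{\mathrm M}^{(k)}\le_{\mathrm{st}}V_{\mathrm U}$; the second uses Chebyshev with the $k$-uniform bound $\var(\bar A^{(k)}(s))\le Cs$ of Appendix~\ref{app:mgf}. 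Then $\Prob(Y^{(k)}<T_0^{(k)})\le\Prob(T_0^{(k)}>M)+\Prob(Y^{(k)}<M)$ yields $\Delta^{(k)}\xrightarrow{\mathrm P}0$. Your Slutsky step, with $V^{\mathrm{RP},(k)}_{\mathrm M}$ independent of the tagged pair, then closes both secondary-customer limits.
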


\subsection{Which Bound Applies? A Heuristic Engineering Approximation}
\label{subsec:which_bound}

We begin by addressing two practical questions: How large is the gap between the bounds, and which bound is likely to provide a more accurate approximation at a given operating point?

To address these questions, we use the coupling construction established in Section~\ref{sec:ph}. Let $T_0$ denote the \textit{first} time at which the queue becomes empty after the arrival of the tagged primary customer in TP:
\begin{equation}
T_0 \coloneqq \inf \left\{ s > 0 : V_{\mathrm{M}} + X^\star_{\CP} + \bar{A}(s) - s \le 0 \right\}.
\end{equation}
By \eqref{eq:delta_identity}, $Y>T_0$ implies $\Delta=0$. Thus, if the triggering delay exceeds $T_0$, the tagged secondary customer arrives only after the queue has emptied at least once following the tagged primary arrival. Moreover, $Y<T_0-X^\star_{\CP}$ implies $\Delta=X^\star_{\CP}$, although the converse need not hold.

The distribution of $T_0$ is challenging to characterize analytically. Furthermore, in practical systems, the distribution of the triggering delay $Y$ may itself be difficult to determine, since the triggering mechanism can depend on complex internal workflows.

To obtain a practical approximation, we introduce the following heuristic. Motivated by the coupling construction, we assume, without a formal proof, that
$T_{\mathrm{L}}<T_0<T_{\mathrm{U}}$, where $T_{\mathrm{L}}$ denotes the first time the lower-bound queue becomes empty after the tagged primary arrival, and $T_{\mathrm{U}}$ denotes the first time the upper-bound queue downcrosses the workload level $X^\star_{\CP}$. Under this assumption, $Y>T_{\mathrm{U}}$ implies $\Delta=0$, whereas $Y<T_{\mathrm{L}}-X^\star_{\CP}$ implies $\Delta=X^\star_{\CP}$. We now introduce a \textit{first-moment rule of thumb} as a simple engineering guideline.

In an $\mathrm{M/G/1}$ queue, the expected time required for the queue to become empty, given an initial workload $W_0$, is $\mathbb{E}[W_0]/(1-\rho)$; this is the mean delay-cycle duration~\cite{Kleinrock1976Volume2}. Applying this result to the lower- and upper-bound queues immediately after a tagged primary arrival gives
\begin{equation}\label{eq:10}
    \mathbb{E}[T_{\mathrm{L}}] = \frac{\lambda\bigl(\mathbb{E}[X_{\CP}^2] +\mathbb{E}[X_{\CS}^2]\bigr)}{2(1-\rho)^2} + \frac{\mathbb{E}[X_{\CP}]}{1-\rho},\qquad
    \mathbb{E}[T_{\mathrm{U}}] = \frac{\lambda\mathbb{E}[(X_{\CP}+X_{\CS})^2]}{2(1-\rho)^2} + \frac{\mathbb{E}[X_{\CP}]}{1-\rho},
\end{equation}
where $\mathbb{E}[T_{\mathrm{L}}]\leq \mathbb{E}[T_{\mathrm{U}}]$ whenever $\mathbb{E}[X_{\CP}X_{\CS}]\geq 0$. This motivates comparing the mean triggering delay $\mathbb{E}[Y]$ with $\mathbb{E}[T_{\mathrm{L}}]$ and $\mathbb{E}[T_{\mathrm{U}}]$:
\begin{align*}
    \mathbb{E}[Y] \ll \mathbb{E}[T_{\mathrm{L}}]
    &\implies \text{the upper-bound queue is expected to provide a good approximation},\\
    \mathbb{E}[Y] \gg \mathbb{E}[T_{\mathrm{U}}]
    &\implies \text{the lower-bound queue is expected to provide a good approximation}.
\end{align*}
When $\mathbb{E}[Y]$ is comparable to both $\mathbb{E}[T_{\mathrm{L}}]$ and $\mathbb{E}[T_{\mathrm{U}}]$, neither bound is expected to be clearly preferable based on this rule alone.

\section{Numerical Results}
\label{sec:numerical}

\begin{figure}[t]
    \centering
    \subfloat[Deterministic triggering delays ($C_Y^2 = 0$).\label{fig:bounds:dependent:det}]{\input{fig_bounds_dependent_det.tex}}\\
    \subfloat[Log-normal triggering delays ($C_Y^2 = 10$).\label{fig:bounds:dependent:lnorm}]{\input{fig_bounds_dependent_lnorm.tex}}
    \caption{Tail distribution of system time for primary ($\CP$) and secondary ($\CS$) customers under dependent service times ($X_{\CP} = X_{\CS}$, $\E[X]=1$, $\rho=0.5$). Curves are presented for Erlang-$2$ and hyperexponential service times across $\E[Y]\in\{0.5,\, 2,\, 5,\, 10\}$. Here $C_Y^2$ denotes the squared coefficient of variation of the triggering delay. The legend in (a) applies to (b).}
    \label{fig:bounds:dependent}
\end{figure}

In this section, we present numerical validations of the theoretical bounds and approximations established in Section~\ref{sec:bounds}. The evaluation has two main objectives: first, to demonstrate that the system-time distributions of both primary and secondary customers are stochastically sandwiched between those of the lower-bound (two-class $\mathrm{M/G/1}$) and upper-bound ($\mathrm{M^{[2]}/G/1}$) queues under general service-time and triggering-delay distributions; and second, to empirically validate the heuristic engineering approximation formulated in Section~\ref{subsec:which_bound}.

\subsection{Verification of System-Time Bounds}
\label{subsec:numerical:bounds}

We evaluate the tail distributions of the system times, $\Prob(S_{\CP} > t)$ and $\Prob(S_{\CS} > t)$, under two triggering-delay distributions with the same mean, $\E[Y]$, but contrasting levels of variability: \emph{deterministic} triggering delays, with a squared coefficient of variation of $C_Y^2 = 0$, and \emph{log-normal} triggering delays, with parameters selected to yield $C_Y^2 = 10$, representing high variability. The mean triggering delay is varied over $\E[Y] \in \{0.5,\, 2,\, 5,\, 10\}$.

To assess the effects of service-time distributions beyond the exponential setting, we examine two service-time distributions with unit means ($\E[X_{\CP}] = \E[X_{\CS}] = 1$): \emph{Erlang-$2$} service times ($C_X^2 = 0.5$, consisting of two stages, each with rate $2$), which represent low variability; and \emph{hyperexponential} service times ($C_X^2 = 2.5$), a two-phase mixture ($p_1/\mu_1 = p_2/\mu_2 = 0.5$, where $p_1 = \frac{1}{2}(1 + \sqrt{3/7})$ and $\mu_i = 2p_i$), which represents high variability. Within each primary--secondary pair, service requirements are dependent, with $X_{\CP} = X_{\CS}$ almost surely. The traffic load (utilization factor) is set to $\rho = 2\lambda = 0.5$. Each simulation run processes $3\times 10^7$ customer pairs. In each run, the first and last $10^6$ customer arrivals are discarded to eliminate initialization and termination effects. Analytical bounds are computed through numerical inversion of the Laplace--Stieltjes transform in Python.

Fig.~\ref{fig:bounds:dependent} is consistent with Theorem~\ref{thm:system_time_bounds}. The system-time distributions obtained through simulation consistently lie between the analytical lower bound (two-class $\mathrm{M/G/1}$) and upper bound ($\mathrm{M^{[2]}/G/1}$). Furthermore, the tail distribution curves for deterministic ($C_Y^2 = 0$) and log-normal ($C_Y^2 = 10$) triggering delays, together with Erlang and hyperexponential service-time distributions, illustrate the effects of the triggering-delay and service-time distributions and remain within the bounds. Moreover, the curves show that decreasing $\E[Y]$ shifts the tail distributions toward the upper bound, whereas increasing $\E[Y]$ shifts them toward the lower bound. This behavior is consistent with the asymptotic limits in Theorems~\ref{thm:asymp_large_delay} and~\ref{thm:asymp_zero_delay}.

The tails of these curves are difficult to read accurately from Fig.~\ref{fig:bounds:dependent}. However, they are important for mission-critical applications, such as ultra-reliable low-latency communication (URLLC) and industrial IoT alarm workflows, where performance guarantees rely on tail percentiles (e.g., 90th, 95th, and 99th). Accordingly, Table~\ref{tab:urllc:percentiles} presents system-time tail quantiles under a load of $\rho = 0.8$ for exponential ($C_X^2 = 1.0$) and hyperexponential ($C_X^2 = 2.5$) service times with deterministic triggering delays. For these percentiles, each simulated row is based on a single continuous run of $5\times 10^7$ customers, while the bound results are obtained analytically.

\begin{table*}[t]
\centering
\caption{Tail system-time quantiles (90th, 95th, and 99th percentiles; time units under $\E[X]=1$) for primary ($\CP$) and secondary ($\CS$) customers under heavy load ($\rho=0.8$), with dependent service requirements ($X_{\CP}=X_{\CS}$). ``Det'' denotes deterministic triggering delays. The simulated $\mathrm{M^T/G/1}$ cases use $\E[Y]\in\{1,\, 5,\, 20\}$. Analytical bounds are computed via numerical Laplace--Stieltjes transform inversion. Each simulated row is based on one continuous simulation run of $5\times 10^7$ customers.}
\label{tab:urllc:percentiles}
\footnotesize
\renewcommand{\arraystretch}{1.10}
\begin{tabular*}{\textwidth}{@{\extracolsep{\fill}}llccc@{}}
\toprule
\textbf{Service} & \textbf{Condition} & \textbf{90th ($\CP\,/\,\CS$)} & \textbf{95th ($\CP\,/\,\CS$)} & \textbf{99th ($\CP\,/\,\CS$)} \\
\midrule
\rowcolor{red!8}
\cellcolor{white}
\multirow{5}{*}{\shortstack[l]{\textbf{Exponential}\\$C_X^2 = 1.0$}}
& Lower bound (two-class $\mathrm{M/G/1}$)  & 11.51 / 11.51 & 14.98 / 14.98 & 23.03 / 23.03 \\
& $\mathrm{M^T/G/1}$ (Det, $\E[Y]=1.0$)     & 21.36 / 22.54 & 28.33 / 29.50 & 44.44 / 45.62 \\
& $\mathrm{M^T/G/1}$ (Det, $\E[Y]=5.0$)     & 19.90 / 21.08 & 26.81 / 27.99 & 42.90 / 44.08 \\
& $\mathrm{M^T/G/1}$ (Det, $\E[Y]=20.0$)    & 16.84 / 17.99 & 23.73 / 24.91 & 39.80 / 40.99 \\
\rowcolor{green!8}
\cellcolor{white}
& Upper bound ($\mathrm{M^{[2]}/G/1}$)      & 21.85 / 23.03 & 28.78 / 29.96 & 44.87 / 46.05 \\
\midrule
\rowcolor{red!8}
\cellcolor{white}
\multirow{5}{*}{\shortstack[l]{\textbf{Hyper-}\\\textbf{exponential}\\$C_X^2 = 2.5$}}
& Lower bound (two-class $\mathrm{M/G/1}$)  & 20.14 / 20.14 & 26.78 / 26.78 & 42.21 / 42.21 \\
& $\mathrm{M^T/G/1}$ (Det, $\E[Y]=1.0$)     & 38.36 / 39.64 & 51.63 / 52.91 & 82.41 / 83.68 \\
& $\mathrm{M^T/G/1}$ (Det, $\E[Y]=5.0$)     & 36.94 / 38.22 & 50.22 / 51.49 & 80.99 / 82.25 \\
& $\mathrm{M^T/G/1}$ (Det, $\E[Y]=20.0$)    & 33.14 / 34.41 & 46.42 / 47.69 & 77.32 / 78.57 \\
\rowcolor{green!8}
\cellcolor{white}
& Upper bound ($\mathrm{M^{[2]}/G/1}$)      & 39.00 / 40.28 & 52.29 / 53.57 & 83.15 / 84.43 \\
\bottomrule
\end{tabular*}
\end{table*}

As the table shows, all simulated percentiles remain strictly bounded between those of the lower and upper bounds. These results confirm that the bounds provide valuable tools for latency-critical performance evaluation in systems where triggered arrivals are prevalent.

\subsection{Validation of the Heuristic Engineering Approximation}
\label{subsec:numerical:timescale}

To assess our heuristic engineering approximation, we introduce the following metric:
\begin{equation}
    \eta(\E[Y]) \coloneqq
    \frac{\E[S_{\CS}] - \E[S_{\CS,\mathrm{L}}]}
         {\E[S_{\CS,\mathrm{U}}] - \E[S_{\CS,\mathrm{L}}]}
    \in [0, 1].
    \label{eq:numerical:eta}
\end{equation}
As $\E[S_{\CS}]$ approaches $\E[S_{\CS,\mathrm{U}}]$, $\eta \to 1$, indicating that the upper bound is a good approximation. Conversely, as $\E[S_{\CS}]$ approaches $\E[S_{\CS,\mathrm{L}}]$, $\eta \to 0$, indicating that the lower bound is a good approximation. For $X_{\CP} = X_{\CS} = X$ (a.s.) with $\E[X] = 1$ and $\rho = 0.5$, we obtain $\E[T_{\mathrm{L}}] = 3.5$ and $\E[T_{\mathrm{U}}] = 5.0$ for Erlang-$2$ service times, and $\E[T_{\mathrm{L}}] = 5.5$ and $\E[T_{\mathrm{U}}] = 9.0$ for hyperexponential service times.

Fig.~\ref{fig:numerical:timescale} plots $\eta(\E[Y])$ against the mean triggering delay $\E[Y]$ for deterministic, exponential, and log-normal triggering delays, with squared coefficients of variation $C_Y^2=0$, $1$, and $10$, respectively. For both service-time distributions, $\eta$ decreases as $\E[Y]$ increases, reflecting the transition from the batch-arrival upper-bound regime at short triggering delays toward the independent-arrival lower-bound regime at long delays. The transition depends on both the service-time distribution and the triggering-delay distribution.

To indicate a threshold at which each bound is expected to provide a useful approximation, we use the practical criteria
\[
    \E[Y]\le \frac{\E[T_{\mathrm{L}}]}{10}
    \qquad\text{and}\qquad
    \E[Y]\ge 10\,\E[T_{\mathrm{U}}],
\]
for which the upper- and lower-bound regimes provide good approximations, respectively. These regimes are heuristic guidelines rather than formal error guarantees. In Fig.~\ref{fig:numerical:timescale}, the shaded green bands denote the heuristic upper-bound regime, while the shaded blue bands denote the lower-bound regime. The corresponding theoretical values of $\mathbb{E}[T_{\mathrm{L}}]$ and $\mathbb{E}[T_{\mathrm{U}}]$ are shown in each panel.

\begin{figure}[t]
\centering
\begin{tikzpicture}
  \begin{groupplot}[
    group style={
      group size=2 by 1,
      horizontal sep=0.5cm,
      y descriptions at=edge left,
      x descriptions at=edge bottom,
    },
    width=8.5cm, height=6.0cm,
    grid=major, grid style={dashed, gray!30},
    xmode=log, log basis x=10,
    ymin=-0.05, ymax=1.05, xmin=0.0316, xmax=501.1872,
    xlabel={Mean triggering delay $\E[Y]$},
    ylabel={Normalized transition metric $\eta(\E[Y])$},
    tick label style={font=\footnotesize},
    label style={font=\footnotesize},
    title style={font=\footnotesize, yshift=-0.5em},
    axis line style={line width=0.8pt},
    tick style={line width=0.6pt},
    ytick={0, 0.15, 0.5, 0.8, 1.0},
    xtick={0.1, 1, 10, 100},
    xticklabels={$10^{-1}$, $10^{0}$, $10^{1}$, $10^{2}$}
  ]

  \nextgroupplot[title={Erlang-2 Service ($\rho=0.5$)},
    legend columns=3,
    legend style={font=\footnotesize, at={(1.05, 1.15)}, anchor=south,
      /tikz/every even column/.append style={column sep=0.4cm}}]
    \fill[green!20, opacity=0.4] (axis cs:0.0316, -0.05) rectangle (axis cs:0.3500, 1.05);
    \fill[gray!25,  opacity=0.8] (axis cs:3.5, -0.05) rectangle (axis cs:5.0, 1.05);
    \fill[blue!15,  opacity=0.4] (axis cs:50.0000, -0.05) rectangle (axis cs:501.1872, 1.05);

    \draw[densely dotted, thick, black!70]
      (axis cs:0.0316, 0.5) -- (axis cs:501.1872, 0.5);
    \draw[densely dashed, line width=0.8pt, black!70]
      (axis cs:3.5, -0.05) -- (axis cs:3.5, 1.05);
    \draw[densely dashed, line width=0.8pt, black!70]
      (axis cs:5.0, -0.05) -- (axis cs:5.0, 1.05);
    \draw[densely dashdotted, line width=0.7pt, green!50!black]
      (axis cs:0.3500, -0.05) -- (axis cs:0.3500, 1.05);
    \draw[densely dashdotted, line width=0.7pt, blue!50!black]
      (axis cs:50.0000, -0.05) -- (axis cs:50.0000, 1.05);

    \node[font=\scriptsize, rotate=90, anchor=center, align=center]
      at (rel axis cs:0.124, 0.55)
      {\textbf{Upper Bound Region}\\$\E[Y] \le \E[T_{\mathrm{L}}]/10$};
    \node[font=\scriptsize, rotate=90, anchor=center, align=center]
      at (rel axis cs:0.881, 0.55)
      {\textbf{Lower Bound Region}\\$\E[Y] \ge 10\,\E[T_{\mathrm{U}}]$};

    \node[font=\scriptsize, rotate=90, anchor=south, fill=white, fill opacity=0.7,
          text opacity=1, inner sep=0.5pt]
      at (axis cs:3.3, 0.75) {$\E[T_{\mathrm{L}}]=3.5$};
    \node[font=\scriptsize, rotate=90, anchor=north, fill=white, fill opacity=0.7,
          text opacity=1, inner sep=0.5pt]
      at (axis cs:5.2, 0.75) {$\E[T_{\mathrm{U}}]=5$};

    \addplot[blue, line width=1pt, mark=*, mark size=1.3pt] coordinates { (0.0316,0.9864) (0.0526,0.9798) (0.0875,0.9623) (0.1456,0.9379) (0.2422,0.8988) (0.4030,0.8355) (0.6704,0.7466) (1.1152,0.6289) (1.8553,0.4898) (3.0866,0.3503) (5.1348,0.2202) (8.5424,0.1126) (14.2112,0.0425) (23.6420,0.0103) (39.3312,0.0006) (65.4319,0.0002) (108.8534,0.0003) (181.0901,-0.0001) (301.2641,-0.0006) (501.1872,0.0005) };
    \addplot[red!80!black, densely dashed, line width=1pt,
             mark=square*, mark size=1.3pt] coordinates { (0.0316,0.9888) (0.0526,0.9773) (0.0875,0.9629) (0.1456,0.9402) (0.2422,0.9034) (0.4030,0.8538) (0.6704,0.7857) (1.1152,0.6966) (1.8553,0.5973) (3.0866,0.4900) (5.1348,0.3809) (8.5424,0.2805) (14.2112,0.1937) (23.6420,0.1256) (39.3312,0.0796) (65.4319,0.0479) (108.8534,0.0295) (181.0901,0.0174) (301.2641,0.0107) (501.1872,0.0062) };
    \addplot[teal, densely dashdotted, line width=1pt,
             mark=triangle*, mark size=1.3pt] coordinates { (0.0316,0.9868) (0.0526,0.9790) (0.0875,0.9675) (0.1456,0.9483) (0.2422,0.9242) (0.4030,0.8892) (0.6704,0.8381) (1.1152,0.7773) (1.8553,0.7048) (3.0866,0.6186) (5.1348,0.5263) (8.5424,0.4303) (14.2112,0.3339) (23.6420,0.2465) (39.3312,0.1720) (65.4319,0.1117) (108.8534,0.0674) (181.0901,0.0391) (301.2641,0.0207) (501.1872,0.0099) };
    
    \addlegendentry{Deterministic}
    \addlegendentry{Exponential ($C_Y^2=1$)}
    \addlegendentry{Log-normal ($C_Y^2=10$)}

  \nextgroupplot[title={Hyperexponential Service ($\rho=0.5$)}]
    \fill[green!20, opacity=0.4] (axis cs:0.0316, -0.05) rectangle (axis cs:0.5500, 1.05);
    \fill[gray!25,  opacity=0.8] (axis cs:5.5, -0.05) rectangle (axis cs:9.0, 1.05);
    \fill[blue!15,  opacity=0.4] (axis cs:90.0000, -0.05) rectangle (axis cs:501.1872, 1.05);

    \draw[densely dotted, thick, black!70]
      (axis cs:0.0316, 0.5) -- (axis cs:501.1872, 0.5);
    \draw[densely dashed, line width=0.8pt, black!70]
      (axis cs:5.5, -0.05) -- (axis cs:5.5, 1.05);
    \draw[densely dashed, line width=0.8pt, black!70]
      (axis cs:9.0, -0.05) -- (axis cs:9.0, 1.05);
    \draw[densely dashdotted, line width=0.7pt, green!50!black]
      (axis cs:0.5500, -0.05) -- (axis cs:0.5500, 1.05);
    \draw[densely dashdotted, line width=0.7pt, blue!50!black]
      (axis cs:90.0000, -0.05) -- (axis cs:90.0000, 1.05);

    \node[font=\scriptsize, rotate=90, anchor=center, align=center]
      at (rel axis cs:0.148, 0.55)
      {\textbf{Upper Bound Region}\\$\E[Y] \le \E[T_{\mathrm{L}}]/10$};
    \node[font=\scriptsize, rotate=90, anchor=center, align=center]
      at (rel axis cs:0.911, 0.55)
      {\textbf{Lower Bound Region}\\$\E[Y] \ge 10\,\E[T_{\mathrm{U}}]$};

    \node[font=\scriptsize, rotate=90, anchor=south, fill=white, fill opacity=0.7,
          text opacity=1, inner sep=0.5pt]
      at (axis cs:5.3, 0.15) {$\E[T_{\mathrm{L}}]=5.5$};
    \node[font=\scriptsize, rotate=90, anchor=north, fill=white, fill opacity=0.7,
          text opacity=1, inner sep=0.5pt]
      at (axis cs:9.2, 0.15) {$\E[T_{\mathrm{U}}]=9$};

    \addplot[blue, line width=1pt, mark=*, mark size=1.3pt] coordinates { (0.0316,0.9931) (0.0526,0.9860) (0.0875,0.9765) (0.1456,0.9632) (0.2422,0.9381) (0.4030,0.9101) (0.6704,0.8575) (1.1152,0.7982) (1.8553,0.7204) (3.0866,0.6205) (5.1348,0.5072) (8.5424,0.3728) (14.2112,0.2392) (23.6420,0.1246) (39.3312,0.0457) (65.4319,0.0116) (108.8534,-0.0004) (181.0901,-0.0012) (301.2641,-0.0001) (501.1872,0.0003) };
    \addplot[red!80!black, densely dashed, line width=1pt,
             mark=square*, mark size=1.3pt] coordinates { (0.0316,0.9916) (0.0526,0.9857) (0.0875,0.9782) (0.1456,0.9622) (0.2422,0.9478) (0.4030,0.9139) (0.6704,0.8790) (1.1152,0.8295) (1.8553,0.7622) (3.0866,0.6864) (5.1348,0.5912) (8.5424,0.4895) (14.2112,0.3853) (23.6420,0.2816) (39.3312,0.1981) (65.4319,0.1293) (108.8534,0.0833) (181.0901,0.0518) (301.2641,0.0310) (501.1872,0.0188) };
    \addplot[teal, densely dashdotted, line width=1pt,
             mark=triangle*, mark size=1.3pt] coordinates { (0.0316,0.9916) (0.0526,0.9880) (0.0875,0.9779) (0.1456,0.9727) (0.2422,0.9519) (0.4030,0.9338) (0.6704,0.9069) (1.1152,0.8658) (1.8553,0.8214) (3.0866,0.7598) (5.1348,0.6897) (8.5424,0.6088) (14.2112,0.5173) (23.6420,0.4273) (39.3312,0.3343) (65.4319,0.2493) (108.8534,0.1776) (181.0901,0.1187) (301.2641,0.0711) (501.1872,0.0411) };

  \end{groupplot}
\end{tikzpicture}
\caption{Validation of the heuristic approximation using the metric $\eta(\mathbb{E}[Y])$ in \eqref{eq:numerical:eta} as a function of the mean triggering delay $\mathbb{E}[Y]$, for Erlang-2 ($C_X^2=0.5$) and hyperexponential ($C_X^2=2.5$) service-time distributions. Here, $\eta \to 1$ indicates convergence to the batch-arrival $\mathrm{M^{[2]}/G/1}$ upper bound, while $\eta \to 0$ indicates convergence to the independent two-class $\mathrm{M/G/1}$ lower bound. Curves correspond to deterministic ($C_Y^2=0$), exponential ($C_Y^2=1$), and log-normal ($C_Y^2=10$) triggering delays. We use $\mathbb{E}[Y]\le \mathbb{E}[T\_{\mathrm{L}}]/10$ and $\mathbb{E}[Y]\ge 10,\mathbb{E}[T\_{\mathrm{U}}]$ as practical criteria for the heuristic approximation to approach the upper and lower bounds, respectively. The shaded green bands indicate the heuristic upper-bound regime, whereas the shaded blue bands indicate the lower-bound regime. The corresponding theoretical values of $\mathbb{E}[T_{\mathrm{L}}]$ and $\mathbb{E}[T_{\mathrm{U}}]$ are indicated in each panel.}
\label{fig:numerical:timescale}
\end{figure}

\section{Conclusion}
\label{sec:conclusion}

In this paper, we investigated the triggered-arrival queueing system, where each exogenous primary arrival triggers a secondary arrival after a general, independent triggering delay. In this queueing system, an exact Markovian characterization under general triggering delays and general service requirements generally requires tracking an unbounded number of continuous residual lifetimes of the triggering delay, which leads to an infinite-dimensional state space.

To overcome this bottleneck, we developed a sample-path coupling framework based on an auxiliary $\mathrm{M/G/\infty}$ delay line populated by an independent stream of reference Poisson customers ($\CA$). Exploiting the Skorokhod reflection mapping over the triggering interval, we established an exact pathwise relationship showing that an arriving secondary customer encounters a workload almost surely bounded between the coupled reference workload and that workload augmented by the service requirement of its corresponding primary customer. Combining this sample-path ordering with level-crossing arguments, we proved that the stationary workload $V_{\mathrm M}$ of the $\mathrm{M^T/G/1}$ queue is stochastically sandwiched between the stationary workloads of two classical, tractable queueing models: $V_{\mathrm L} \le_{\mathrm{st}} V_{\mathrm M} \le_{\mathrm{st}} V_{\mathrm U}$, where $V_{\mathrm L}$ corresponds to a two-class $\mathrm{M/G/1}$ queue with independent Poisson arrivals, and $V_{\mathrm U}$ corresponds to a batch-arrival $\mathrm{M^{[2]}/G/1}$ queue in which each primary--secondary pair enters simultaneously. This stochastic sandwich holds under general triggering-delay distributions and general within-pair service distributions satisfying stability and finite second moments. We further showed that the bounds are asymptotically tight: $V_{\mathrm M} \xrightarrow{\mathrm d} V_{\mathrm L}$ as triggering delays diverge in probability to infinity, and $V_{\mathrm M} \xrightarrow{\mathrm d} V_{\mathrm U}$ as they vanish in probability to zero. We then translated these results into rigorous stochastic bounds on class-specific system-time distributions.

From an engineering perspective, our results demonstrate that when a conservative, worst-case performance guarantee is sufficient, system designers do not need to estimate or track the triggering-delay distribution: the batch-arrival $\mathrm{M^{[2]}/G/1}$ envelope provides a rigorous upper bound on delays and workload exceedance probabilities in the infinite-buffer model. However, when the triggering delay is comparable to the queue clearing time, neither bound is guaranteed to be tight; in this regime, incorporating the triggering-delay timescale $\E[Y]$ relative to the queue clearing time may help obtain a tighter approximation.



\appendix
\makeatletter
\renewcommand*{\@seccntformat}[1]{%
  \expandafter\ifx\csname c@#1\endcsname\c@section
    Appendix~%
  \fi
  \csname the#1\endcsname.\quad
}
\makeatother
\section{Proof of Lemma~\ref{lem:ph:poisson_dep}}
\label{app:poisson_dep}
We aim to demonstrate that the departure stream of auxiliary customers $\CA$ from $\mathrm{Q_A}$ is a causally independent Poisson process with respect to the main queue $\mathrm{Q_M}$. By the displacement theorem for Poisson point processes~\cite{baccelli2013elements,daley2003introduction}, applying i.i.d. non-negative translation delays $\{Y_i\}$ to the points of an exogenous Poisson process with rate $\lambda_{\CA} > 0$ yields another homogeneous Poisson process with the same intensity, $\lambda_{\CA}$. Because $\mathrm{Q_A}$ is an infinite-server facility ($\mathrm{M/G/\infty}$), auxiliary customers experience no queueing delay and do not interact with other customers. Moreover, their arrival epochs and triggering delays are mutually independent of all service times $\{(X_{\CP, i}, X_{\CS, i})\}$ and of the primary arrival epochs $\{T_i\}$. Therefore, the departure stream of $\CA$ customers is independent of the workload process of the main queue $\mathrm{Q_M}$ for all $t \ge 0$. Consequently, this departure stream constitutes a causally independent Poisson process, as defined in Definition~\ref{def:exp:causal_indep}, satisfies Wolff's lack of anticipation assumption~\cite{wolff1982poisson}, and permits the rigorous application of the PASTA property at its departure epochs.
\section{Proof of Lemma~\ref{lem:operator_properties}}
\label{app:operator_properties_proof}
We establish the properties of the operators $T_{\mathrm L}$ and $T_{\mathrm U}$ in four steps: preservation of the function space $\overline{\mathcal{F}}$, monotonicity, continuity under pointwise convergence, and uniqueness of fixed points.

\paragraph{Preservation of the space $\overline{\mathcal{F}}$}
We first extend the definition by setting $T_{\mathrm L}(\overline F)(x) = T_{\mathrm U}(\overline F)(x) = 1$ for $x < 0$. For $x \ge 0$, the integrals in \eqref{eq:TLdef}--\eqref{eq:TUdef} are non-negative. Bounding $\overline{F}(x - t) \le 1$ yields the uniform upper bound $\lambda \big(\E[X_{\CP}] + \E[X_{\CS}]\big) = \rho < 1$ for both operators in \eqref{eq:TLdef}--\eqref{eq:TUdef}. Thus, the outputs take values in $[0, \rho] \subset [0, 1]$ for $x \ge 0$. Since $\overline{F}$ is non-increasing, $x_1 \le x_2$ implies $\overline{F}(x_1 - t) \ge \overline{F}(x_2 - t)$, so both $T_{\mathrm L}(\overline F)$ and $T_{\mathrm U}(\overline F)$ are non-increasing on $x \in [0, \infty)$. Finally, as $x_k \downarrow x$, the right-continuity of $\overline{F}$ implies $\overline{F}(x_k - t) \to \overline{F}(x - t)$; because the integrands are bounded by their respective integrable service tails, the dominated convergence theorem (DCT) ensures that $T_{\mathrm L}(\overline F)$ and $T_{\mathrm U}(\overline F)$ are right-continuous on $x \in [0, \infty)$. Hence, both operators map $\overline{\mathcal{F}}$ into itself.

\paragraph{Monotonicity of the operators}
Let $\overline F_1, \overline F_2 \in \overline{\mathcal{F}}$ with $\overline F_1 \le \overline F_2$ pointwise. Because $\overline B_{\CP}$, $\overline B_{\CS}$, and $\overline B_{\CP+\CS}$ are non-negative, the inequality is preserved under integration:
\[
    \int_0^\infty \overline B(t) \overline F_1(x - t) \diff t \le \int_0^\infty \overline B(t) \overline F_2(x - t) \diff t.
\]
Consequently, $T_{\mathrm L}(\overline F_1) \le T_{\mathrm L}(\overline F_2)$ and $T_{\mathrm U}(\overline F_1) \le T_{\mathrm U}(\overline F_2)$, establishing monotonicity.

\paragraph{Continuity under pointwise convergence}
Suppose a sequence $\overline F_n \to \overline F$ pointwise as $n \to \infty$. For the corresponding service tails, the integrals are finite. By the DCT, the limit can be passed under the integral, proving that $T_{\mathrm L}(\overline F_n) \to T_{\mathrm L}(\overline F)$ and $T_{\mathrm U}(\overline F_n) \to T_{\mathrm U}(\overline F)$ pointwise on $[0, \infty)$.

\paragraph{Uniqueness of fixed points}
The equation $\overline F = T_{\mathrm L}(\overline F)$ is the level-crossing equation for the stationary workload tail of a stable two-class $\mathrm{M/G/1}$ queue with independent Poisson arrivals at aggregate rate $2\lambda$ and service-time distributions $B_{\CP}$ and $B_{\CS}$~\cite{Kleinrock1975Volume1,brill2017level}. Because $\rho < 1$, this queue admits a unique stationary workload distribution, and therefore $T_{\mathrm L}$ has a unique fixed point in $\overline{\mathcal{F}}$. Similarly, $\overline F = T_{\mathrm U}(\overline F)$ is the level-crossing equation for a batch-arrival $\mathrm{M^{[2]}/G/1}$ queue with batch size two and total service requirement $X_{\CP} + X_{\CS}$, which is likewise stable under $\rho < 1$ and possesses a unique fixed point in $\overline{\mathcal{F}}$.

\section{Second-Moment Analysis of Aggregate Workload Input}
\label{app:mgf}

We establish a uniform linear upper bound on the variance of the cumulative workload input $A(t)$ entering the system in stationarity over an interval of length $t > 0$, assuming only finite second moments of $(X_{\CP}, X_{\CS})$ and finite $\E[Y] < \infty$. The service times within each pair $(X_{\CP}, X_{\CS})$ may exhibit general dependence.

In stationarity, the cumulative workload input $A(t)$ entering the queue during $(0, t]$ decomposes into two mutually independent components:
\begin{equation}
    A(t) = A^\circ(t) + A^+(t),
\end{equation}
where:
\begin{itemize}
    \item $A^\circ(t)$ represents the work entering during $(0, t]$ from customers whose primary job arrived prior to $t = 0$, but whose secondary job arrives in $(0, t]$. The number of such customers present in $\mathrm{Q_A}$ at $t = 0$, denoted by $N$, is Poisson distributed with parameter $\gamma \coloneqq \lambda \E[Y]$. Each pending secondary independently completes its delay in $(0, t]$ with probability $F_Y^{\rm res}(t) \coloneqq \frac{1}{\E[Y]} \int_0^t \overline{F}_Y(s) \diff s$. Thus, the number of completing secondaries is Poisson distributed with parameter $\gamma F_Y^{\rm res}(t)$, and each contributes service requirement $X_{\CS}$. Hence, $A^\circ(t)$ is a compound Poisson sum with variance
    \begin{equation}
        \var(A^\circ(t)) = \gamma F_Y^{\rm res}(t) \, \E[X_{\CS}^2].
        \label{eq:var_A_circ}
    \end{equation}
    
  \item $A^+(t)$ represents the work entering from primary arrivals occurring in $(0, t]$ (a Poisson process with rate $\lambda$) and any of their corresponding secondary arrivals that enter before $t$. Each primary customer arrives at a uniform epoch $U \in (0, t)$ and brings service $X_{\CP}$. Its secondary customer also arrives before $t$ if and only if $U + Y \le t$, which occurs with probability $p = \frac{1}{t} \int_0^t F_Y(s) \diff s = 1 - \frac{\E[Y]}{t} F_Y^{\rm res}(t)$. By Poisson thinning, $A^+(t)$ is the sum of two independent compound Poisson sums:
    (i) primaries whose secondaries also arrive before $t$, with Poisson parameter $\lambda t p = \lambda t - \gamma F_Y^{\rm res}(t)$ and contributing total service $X_{\CP} + X_{\CS}$; and
    (ii) primaries whose secondaries arrive after $t$, with Poisson parameter $\lambda t (1-p) = \gamma F_Y^{\rm res}(t)$ and contributing only $X_{\CP}$.
    Thus,
    \begin{equation}
        \var(A^+(t)) = \left(\lambda t - \gamma F_Y^{\rm res}(t)\right) \E\left[(X_{\CP} + X_{\CS})^2\right] + \gamma F_Y^{\rm res}(t) \, \E[X_{\CP}^2].
        \label{eq:var_A_plus}
    \end{equation}
\end{itemize}

Summing \eqref{eq:var_A_circ} and \eqref{eq:var_A_plus}, the variance of the aggregate input $A(t)$ is
\begin{align}
    \var(A(t)) 
    &= \var(A^\circ(t)) + \var(A^+(t)) \nonumber \\
    &= \lambda t \, \E\left[(X_{\CP} + X_{\CS})^2\right] - \gamma F_Y^{\rm res}(t) \left( \E\left[(X_{\CP} + X_{\CS})^2\right] - \E[X_{\CP}^2] - \E[X_{\CS}^2] \right) \nonumber \\
    &= \lambda t \, \E\left[(X_{\CP} + X_{\CS})^2\right] - 2 \gamma F_Y^{\rm res}(t) \, \E[X_{\CP} X_{\CS}].
    \label{eq:exact_variance_general}
\end{align}
Because $X_{\CP}, X_{\CS} \ge 0$, we have $\E[X_{\CP}X_{\CS}] \ge 0$. Since $\gamma F_Y^{\rm res}(t) \ge 0$, dropping the non-negative subtractive term yields the uniform upper bound
\begin{equation}
    \var(A(t)) \le C t,
    \label{eq:variance_linear_bound}
\end{equation}
where $C \coloneqq \lambda \E\left[(X_{\CP} + X_{\CS})^2\right] < \infty$. Bound \eqref{eq:variance_linear_bound} holds universally across all triggering-delay distributions and service distributions with finite second moments.

\section{Proof of Theorem~\ref{thm:asymp_large_delay}}
\label{app:large_delay_proof}

We aim to demonstrate that, when the triggering delays diverge to infinity in probability ($Y^{(k)} \xrightarrow{\mathrm{P}} \infty$), the stationary workload converges in distribution to that of the two-class $\mathrm{M/G/1}$ queue (the lower bound): $V_{\mathrm M}^{(k)} \xrightarrow{\mathrm d} V_{\mathrm L}$. The underlying physical intuition is that large triggering delays cause secondary customers to arrive long after their primary counterparts, thereby decoupling the two streams. Formally, the theorem establishes convergence of the stationary workload distribution; it does not assert convergence of the entire arrival point process. To formalize this result, we show that $\Delta^{(k)}$ converges to zero in probability, which, in turn, drives the stationary workload distribution toward the lower-bound fixed point $V_{\mathrm L}$.

We begin by examining the workload observed by arriving secondary customers through the coupling construction of Section~\ref{sec:ph}. Let $W_{\CS}^{(k)}$ denote the workload observed by an arriving secondary customer in the $k$-th system. From Theorem~\ref{thm:ph:conditional_dist}, this workload satisfies
\begin{equation}\label{eq:asymp_WCS_formal}
    W_{\CS}^{(k)} \overset{\mathrm d}{=} V^{\mathrm{RP},(k)}_{\mathrm M}((Y^{(k)})^-) + \Delta^{(k)}, \qquad V^{\mathrm{RP},(k)}_{\mathrm M}((Y^{(k)})^-) \overset{\mathrm d}{=} V_{\mathrm{M}}^{(k)},
\end{equation}
Here, $\Delta^{(k)}$ depends on whether the queue has emptied at least once before the tagged secondary arrives. Let $\bar{A}^{(k)}(s)$ denote the cumulative workload arriving at $\mathrm{Q_M}$ during $(0,s]$. Let $T_0^{(k)}$ denote the first time the queue becomes empty after the arrival of the tagged primary customer:
\begin{equation}\label{eq:asymp_T0_def}
    T_0^{(k)} \coloneqq \inf \left\{ s > 0 : V_{\mathrm{M}}^{(k)} + X^{\star}_{\CP} + \bar{A}^{(k)}(s) - s \le 0 \right\}.
\end{equation}
Furthermore, define the interval infimum of the target workload trajectory over the triggering delay $(0, Y^{(k)})$ as
\begin{equation}\label{eq:asymp_I_def}
    I^{(k)}((Y^{(k)})^-) \coloneqq \inf_{s \in (0, Y^{(k)})} \left( V_{\mathrm{M}}^{(k)} + X^{\star}_{\CP} + \bar{A}^{(k)}(s) - s \right).
\end{equation}
By the Skorokhod reflection mapping analysis in Section~\ref{subsec:ph:coupling}, we have
\begin{equation}\label{eq:asymp_Delta_piecewise}
    \Delta^{(k)} = \min\left\{ X^{\star}_{\CP}, \max\bigl(0,I^{(k)}((Y^{(k)})^-)\bigr) \right\}.
\end{equation}
Observing that $I^{(k)}((Y^{(k)})^-) \le 0$ if and only if $Y^{(k)} \ge T_0^{(k)}$, \eqref{eq:asymp_Delta_piecewise} can be written equivalently as
\begin{equation}\label{eq:asymp_Delta_compact}
    \Delta^{(k)} = \mathbbm{1}_{\{Y^{(k)} < T_0^{(k)}\}} \min\left\{ X^{\star}_{\CP}, I^{(k)}((Y^{(k)})^-)\right\}.
\end{equation}

Next, we establish that the family of the \textit{first} clearing times $\{T_0^{(k)}\}_{k \ge 1}$ is uniformly tight. If $T_0^{(k)} > s$, then the workload has remained strictly positive throughout $[0, s]$, which implies that $V_{\mathrm M}^{(k)} + X_{\CP}^{\star} + \bar A^{(k)}(s) - s > 0$. Set $\delta \coloneqq (1-\rho)/2 > 0$, so that $s - \rho s = 2\delta s$. The union bound then yields
\begin{align}
    \Prob\bigl(T_0^{(k)} > s\bigr) 
    &\le \Prob\bigl(V_{\mathrm M}^{(k)} + X_{\CP}^{\star} + \bar A^{(k)}(s) - s > 0\bigr) \nonumber \\
    &= \Prob\bigl((V_{\mathrm M}^{(k)} + X_{\CP}^{\star}) + (\bar A^{(k)}(s) - \rho s) > 2\delta s\bigr) \nonumber \\
    &\le \Prob\bigl(V_{\mathrm M}^{(k)} + X_{\CP}^{\star} > \delta s\bigr) + \Prob\bigl(\bar A^{(k)}(s) - \rho s > \delta s\bigr).
    \label{eq:asymp_tightness_split}
\end{align}
The first term is bounded uniformly in $k$ by the upper bound of \eqref{eq:workload_bounds},
\[
    \Prob(V_{\mathrm M}^{(k)} + X_{\CP}^{\star} > \delta s) \le \Prob(V_{\mathrm U} + X_{\CP}^{\star} > \delta s).
\]
For the second term, applying Chebyshev's inequality alongside the uniform variance bound \eqref{eq:variance_linear_bound} from Appendix~\ref{app:mgf} gives
\[
    \Prob(\bar A^{(k)}(s) - \rho s > \delta s) \le \frac{\var(\bar A^{(k)}(s))}{\delta^2 s^2} \le \frac{C}{\delta^2 s}.
\]
Combining these bounds establishes the uniform tail bound:
\begin{equation}
    \sup_{k\ge1}\Prob\bigl(T_0^{(k)}>s\bigr) \le \Prob\bigl(V_{\mathrm U}+X_{\CP}^{\star}>\delta s\bigr) + \frac{C}{\delta^2 s} \xrightarrow[s\to\infty]{} 0,
\end{equation}
which proves that $\{T_0^{(k)}\}_{k\ge1}$ is uniformly tight. Because the tagged triggering delay is independent of the pre-arrival workload, the tagged service pair, and the future arrival and service primitives that determine $T_0^{(k)}$, it is independent of $T_0^{(k)}$. For every fixed $M > 0$, the set inclusion $\{Y^{(k)} < T_0^{(k)}\} \subseteq \{T_0^{(k)} > M\} \cup \{Y^{(k)} < M\}$ yields
\[
    \Prob(Y^{(k)} < T_0^{(k)}) \le \Prob(T_0^{(k)} > M) + \Prob(Y^{(k)} < M).
\]
Choosing $M$ sufficiently large ensures $\sup_{k\ge 1}\Prob(T_0^{(k)} > M) < \varepsilon/2$, while $Y^{(k)} \xrightarrow{\mathrm{P}} \infty$ implies $\Prob(Y^{(k)} < M) \to 0$ as $k \to \infty$. Thus $\lim_{k\to\infty}\Prob(Y^{(k)} < T_0^{(k)}) = 0$, which yields
\[
    \Delta^{(k)} \xrightarrow{\mathrm{P}} 0.
\]

To complete the proof, we apply subsequential compactness and identify the unique limit. Consider any subsequence, still indexed by $k$, such that $V_{\mathrm M}^{(k)} \xrightarrow{\mathrm d} V_{\mathrm M}^{(\infty)}$. By Slutsky's theorem (Cram\'er's theorem)~\cite[pp.~248--249]{gut2005probability},
\[
    \bigl(V_{\mathrm M}^{(k)},\Delta^{(k)}, X^\star_\CP, X^\star_\CS\bigr) \xrightarrow{\mathrm d} \bigl(V_{\mathrm M}^{(\infty)},0,X^\star_\CP, X^\star_\CS\bigr),
\]
and therefore $W_{\CS}^{(k)} \xrightarrow{\mathrm d} V_{\mathrm M}^{(\infty)}$, independent of $(X^\star_\CP, X^\star_\CS)$. Applying the exact workload identity \eqref{eq:palm_tail} to the $k$-th system, simplifying as in \eqref{eq:secondary_upper_conditional}, and passing to the limit via the DCT gives
\begin{align}
    \overline F_{V_{\mathrm M}^{(\infty)}}(x)
    &= \lim_{k\to\infty} \overline F_{V_{\mathrm M}^{(k)}}(x) \nonumber \\
    &= \lambda \E\left[\int_0^{X^\star_{\CP}} \Prob\bigl(V_{\mathrm M}^{(\infty)}+t>x\bigr)\diff t\right]
       + \lambda \E\left[\int_0^{X^\star_{\CS}} \Prob\bigl(V_{\mathrm M}^{(\infty)}+t>x\bigr)\diff t\right] \nonumber \\
    &= T_{\mathrm L}\bigl(\overline F_{V_{\mathrm M}^{(\infty)}}\bigr)(x).
    \label{eq:asymp_fixed_point_large}
\end{align}
Thus, $\overline F_{V_{\mathrm M}^{(\infty)}}$ is a fixed point of $T_{\mathrm L}$. By Lemma~\ref{lem:operator_properties}, this fixed point is unique in $\overline{\mathcal{F}}$ and equals $\overline F_{V_{\mathrm L}}$. Since every convergent subsequence converges to the same limit $V_{\mathrm L}$ and the family $\{V_{\mathrm M}^{(k)}\}$ is tight (from \eqref{eq:workload_bounds}), the entire sequence converges in distribution:
\begin{equation}
    V_{\mathrm M}^{(k)} \xrightarrow{\mathrm d} V_{\mathrm L}
    \qquad \text{as } k\to\infty.
\end{equation}
This completes the proof.

\section{Proof of Theorem~\ref{thm:asymp_zero_delay}}
\label{app:small_delay_proof}

We aim to prove that when triggering delays converge to zero in probability ($Y^{(k)} \xrightarrow{\mathrm{P}} 0$), the stationary workload converges in distribution to the batch-arrival $\mathrm{M^{[2]}/G/1}$ queue (upper bound): $V_{\mathrm M}^{(k)} \xrightarrow{\mathrm d} V_{\mathrm U}$. The physical intuition is that near-instantaneous triggering causes secondary customers to enter the queue while their primary counterpart is still in the queue, making the arrival stream behave as a batch process. We formalize this by showing that $\Delta^{(k)}$ converges in probability to the primary service time $X_{\CP}^{\star}$, leading to the batch-arrival fixed point $V_{\mathrm U}$.

We first establish the asymptotic behavior of  $\Delta^{(k)}$. Under the stability condition \eqref{eq:model:stability}, the bound $V_{\mathrm M}^{(k)} \le_{\mathrm{st}} V_{\mathrm U}$ ensures that $\{V_{\mathrm M}^{(k)}\}_{k\ge1}$ is uniformly tight, so every subsequence possesses a weakly convergent subsubsequence by Prohorov's theorem. Consider the tagged primary customer arriving at $t = 0$ with strictly positive service requirement $X_{\CP}^{\star} > 0$ a.s., whose system time satisfies $S_{\CP}^{(k)} = V_{\mathrm M}^{(k)} + X_{\CP}^{\star} \ge X_{\CP}^{\star} > 0$ a.s. Since $Y^{(k)} \xrightarrow{\mathrm{P}} 0$, the probability that the primary departs before its secondary arrives vanishes:
\[
\Prob\bigl(S_{\CP}^{(k)} \le Y^{(k)}\bigr) \le \Prob\bigl(X_{\CP}^{\star} \le Y^{(k)}\bigr) \to 0 \qquad \text{as } k \to \infty.
\]
Thus, the primary is still in the queue at time $Y^{(k)}$ with probability tending to $1$. On the event $\{S_{\CP}^{(k)} > Y^{(k)}\}$, the queue has not emptied during $(0, Y^{(k)})$, and the primary has received at most $Y^{(k)}$ units of service, yielding the sample-path bound $|\Delta^{(k)} - X_{\CP}^{\star}| \le Y^{(k)}$. Therefore, for every $\varepsilon > 0$,
\[
\Prob\Bigl(\bigl|\Delta^{(k)} - X_{\CP}^{\star}\bigr| > \varepsilon\Bigr) \le \Prob\bigl(Y^{(k)} > \varepsilon\bigr) + \Prob\bigl(S_{\CP}^{(k)} \le Y^{(k)}\bigr) \to 0,
\]
which proves that
\begin{equation}
\Delta^{(k)} \xrightarrow{\mathrm{P}} X_{\CP}^{\star} \qquad \text{as } k \to \infty.
\end{equation}

Next, we characterize the limiting workload distribution. Let $\{V_{\mathrm M}^{(k)}\}_{k\ge1}$ be any subsequence converging in distribution to a limit $V_{\mathrm M}^{(\infty)}$. By Slutsky's theorem, 
\[
    \bigl(V_{\mathrm M}^{(k)}, \Delta^{(k)}, X^\star_\CP, X^\star_\CS\bigr) \xrightarrow{\mathrm d} \bigl(V_{\mathrm M}^{(\infty)}, X^\star_\CP, X^\star_\CP, X^\star_\CS\bigr),
\]
which implies that the workload seen by secondary customers satisfies $W_{\CS}^{(k)} \xrightarrow{\mathrm d} V_{\mathrm M}^{(\infty)}+X_{\CP}^{\star}$. Applying the exact workload identity \eqref{eq:palm_tail} to the $k$-th system, simplifying the secondary term as in \eqref{eq:secondary_upper_conditional}, and passing to the limit via the DCT gives
\begin{align*}
\overline F_{V_{\mathrm M}^{(\infty)}}(x) 
&= \lambda \E\Bigl[ \int_0^{X_{\CP}^{\star}} \mathbbm{1}_{\{V_{\mathrm M}^{(\infty)}+t>x\}} \diff t \Bigr] + \lambda \E\Bigl[ \int_0^{X_{\CS}^{\star}} \mathbbm{1}_{\{V_{\mathrm M}^{(\infty)}+X_{\CP}^{\star}+t>x\}} \diff t \Bigr] \\
&= \lambda \E\Bigl[\int_0^{X_{\CP}^{\star}+X_{\CS}^{\star}}  \mathbbm{1}_{\{V_{\mathrm M}^{(\infty)}+u>x\}}  \diff u \Bigr] = T_{\mathrm U}\bigl(\overline F_{V_{\mathrm M}^{(\infty)}}\bigr)(x), \qquad x\ge0.
\end{align*}
By Lemma~\ref{lem:operator_properties}, the fixed point of $T_{\mathrm U}$ is unique in $\overline{\mathcal{F}}$ and equals $\overline F_{V_{\mathrm U}}$, so $V_{\mathrm M}^{(\infty)} \overset{\mathrm d}{=} V_{\mathrm U}$. Because every convergent subsequence converges to the same unique limit $V_{\mathrm U}$ and the sequence is tight, the entire sequence converges in distribution:
\begin{equation}
V_{\mathrm M}^{(k)} \xrightarrow{\mathrm d} V_{\mathrm U} \qquad\text{as } k\to\infty.
\end{equation}
This completes the proof.

\bibliographystyle{elsarticle-num}
\bibliography{References}

\end{document}